\newcommand{\Z}{\mathbb{Z}}
\newcommand{\bq }{\begin{equation}}
\newcommand{\eq }{\end{equation}}
\theoremstyle{plain}
\newtheorem{thm}{Theorem}[section]
\newtheorem{lem}[thm]{Lemma}
\newtheorem{prop}[thm]{Proposition}
\newtheorem{cor}[thm]{Corollary}
\newtheorem{rem}[thm]{Remark}
\newtheorem{preg}[thm]{Question}
\theoremstyle{definition}
\newtheorem{defn}[thm]{Definition}
\theoremstyle{example}
\title{On the locus of smooth plane curves with a fixed automorphism group}
\author[E. Badr] {Eslam Badr}
\address{$\bullet$\,\,Eslam Badr}
\address{Departament Matem\`atiques, Edif. C, Universitat Aut\`onoma de Barcelona\\
08193 Bellaterra, Catalonia}
\email{eslam@mat.uab.cat}
\address{Department of Mathematics,
Faculty of Science, Cairo University, Giza-Egypt}
\email{eslam@sci.cu.edu.eg}
\author[F. Bars] {Francesc Bars}
\address{$\bullet$\,\,Francesc Bars}
\address{Departament Matem\`atiques, Edif. C, Universitat Aut\`onoma de Barcelona\\
08193 Bellaterra, Catalonia} \address{{Barcelona
Graduate School of Mathematics, Catalonia}}
 \email{francesc@mat.uab.cat}
\thanks{E. Badr and F. Bars are supported by MTM2013-40680-P}
\keywords{plane non-singular curves; automorphism groups}
\subjclass[2010]{14H37, 14H50, 14H45}
\begin{document}
\begin{abstract}
Let $M_g$ be the moduli space of smooth, genus $g$ curves over an
algebraically closed field $K$ of zero characteristic. Denote by
${M_g(G)}$ the subset of $M_g$ of curves $\delta$ such that $G$ (as
a finite non-trivial group) is isomorphic to a subgroup of
$Aut(\delta)$, the full automorphism group of $\delta$, and let
$\widetilde{M_g(G)}$ be the subset of curves $\delta$ such that
$G\cong Aut(\delta)$. Now, for an integer $d\geq 4$, let $M_g^{Pl}$
be the subset of $M_g$ representing smooth, genus $g$ plane curves
of degree $d$ (in such case, $g=(d-1)(d-2)/2$), and consider the sets
$M_g^{Pl}(G):=M_g^{Pl}\cap M_g(G)$ and
$\widetilde{M_g^{Pl}(G)}:=\widetilde{M_g(G)}\cap M_g^{Pl}$.
\par In this paper, we study some aspects of the irreducibility of
$\widetilde{M_g^{Pl}(G)}$ and its interrelation with the existence
of ``normal forms'', i.e. non-singular plane equations (depending on
a set of parameters) such that a specialization of the parameters
gives a certain non-singular plane model associated to the elements
of $\widetilde{M_g^{Pl}(G)}$. In particular, we introduce the
concept of being equation strongly irreducible (ES-Irreducible) for
which the locus $\widetilde{M_g^{Pl}(G)}$ is represented by a single
``normal form''. Henn, in \cite{He}, and Komiya-Kuribayashi, in
\cite{KK}, observed that $\widetilde{M_3^{Pl}(G)}$ is
ES-Irreducible. In this paper we prove that this phenomena does not
occur for any odd $d>4$. More precisely, let $\Z/m\Z$ be the cyclic
group of order $m$, we prove that $\widetilde{M_g^{Pl}(\Z/(d-1)\Z)}$ is not ES-Irreducible for any odd integer $d\geq5$, and the
number of its irreducible components is at least two.
Furthermore, we conclude the previous result when $d=6$ for the
locus $\widetilde{M_{10}^{Pl}(\Z/3\Z)}$.
\par Lastly, we prove the analogy of these statements when $K$ is any algebraically closed field of positive
characteristic $p$ such that $p>(d-1)(d-2)+1$.
\end{abstract}

\maketitle

\section{Introduction}

Let $K$ be an algebraically closed field of zero characteristic and
fix an integer $d\geq 4$. We consider, up to $K$-isomorphism, a
projective non-singular curve $\delta$ of genus $g=(d-1)(d-2)/2$, and
assume that $\delta$ has a non-singular plane model, i.e. $\delta\in
M_g^{Pl}$.

{It is well known that} any $\delta\in M_g^{Pl}(G)$ corresponds to a
set {$\{C_{\delta}\}$ }of non-singular plane models in
$\mathbb{P}^2(K)$ such that any two of them are {$K$-isomorphic
through a projective transformation $P\in PGL_3(K)$} (where
$PGL_N(K)$ is the classical projective linear group of $N\times N$
invertible matrices over $K$), and their automorphism groups are
conjugate. More concretely, {fixing a non-singular, degree $d$ plane
model $C$ of $\delta$ whose defining equation is $F(X;Y;Z)=0$}.
Then, $Aut(C)$ is a finite subgroup of $PGL_3(K)$, and also we have
$\rho(G)\preceq Aut(C)$ for some injective representation
$\rho:G\hookrightarrow PGL_3(K)$. Moreover, $\rho(G)=Aut(C)$
whenever $\delta\in\widetilde{M_g^{Pl}(G)}$. For another
non-singular plane model $C'$ of $\delta$, there exists $P\in
PGL_3(K)$ where $C'$ is defined by $F(P(X,Y,Z))=0$, and
$P^{-1}\rho(G)P\preceq Aut(C')$ (respectively,
$P^{-1}\rho(G)P=Aut(C')$ if $\delta\in\widetilde{M_g^{Pl}(G)}$) .

\par For an arbitrary, but a fixed degree $d$, classical and deep questions on the subject are: list the
groups that appear as the exact automorphism groups of algebraic
non-singular plane curves of degree $d$, and for each such group,
determine the associated ``normal forms'', i.e. a finite set of
homogenous equations $\{N_{1,G},\ldots,N_{k,G}\}$ in $X,Y,Z$
together with some parameters (under some restrictions) such that
any specialization of a certain $N_{i,G}$ in $K$ corresponds to a
unique $\delta\in\widetilde{M_g^{Pl}(G)}$ (is the one that is
associated to the non-singular plane model given by the
specialization of the normal form $N_{i,G}$), and given any
$\delta\in\widetilde{M_g^{Pl}(G)}$, there exists a unique
$i_{\delta}$ and a specialization of the parameters in $K$ for
$N_{i_{\delta},G}$ such that one obtains a plane non-singular model
associated to $\delta$.
%
%
 In particular, any specialization of the
parameters of two distinct $N_{i,G}$ gives two non-singular plane
models, which in turns relate to two non-isomorphic plane
non-singular curves of $\widetilde{M_g^{Pl}(G)}$.


For $d=4$, Henn in \cite{He} and Komiya-Kuribayashi in \cite{KK},
answered the above natural questions. See also Lorenzo's thesis
\cite{Lo} \S\,2.1 and \S\,2.2,  in order to fix some minor details.
It appears, for $d=4$, the following phenomena: any element of
$\widetilde{M_3^{Pl}(G)}$ has a non-singular plane model through
some specialization of the parameters of a single normal form.
If this phenomena appears for some $g$, we say that the locus
$\widetilde{M_g^{Pl}(G)}$ is ES-Irreducible (see \S2 for the precise
definition). This is a weaker condition than the irreducibility of
this locus inside the moduli space $M_g$. In particular, it
follows, by Henn \cite{He} and Komiya-Kuribayashi \cite{KK}, that the
locus $\widetilde{M_3^{Pl}(G)}$ is always ES-Irreducible.

The motivation of this work is that we did not expect
$\widetilde{M_g^{Pl}(G)}$ to be ES-Irreducible in general. In order
to construct counter examples for which $\widetilde{M_g^{Pl}(G)}$ is
not ES-Irreducible: we need first, a group $G$ such that there exist
at least two injective representations $\rho_i:G\hookrightarrow
PGL_3(K)$ with $i=1,2$, which are not conjugate (i.e\,\,there is no
transformation $P\in PGL_3(K)$ with $P^{-1}\rho_1(G)P=\rho_2(G)$,
more details are included in \S2), and for the zoo of groups that
could appear for non-singular plane curves \cite{Harui}, we consider
$G$, a cyclic group of order $m$. Secondly, one needs to prove the
existence of two non-singular plane curves with automorphism groups conjugate to $\rho_i(G)$ for each $i=1,2$.

\par The main results of the paper are that, the locus
$\widetilde{M_g^{Pl}(\Z/(d-1)\Z)}$ is not ES-irreducible for any odd degree $d\geq 5$, and it has
at least two irreducible components (If d=5, we know by \cite{BaBaaut}, that the only group $G$ for which
$\widetilde{M_{6}^{Pl}(G)}$ is not ES-Irreducible is $\Z/4\Z$).
For $d$ even, we prove in section \S\,5 that
$\widetilde{M_{10}^{Pl}(\Z/3\Z)}$ is not ES-irreducible. It is to be
noted that we may conjecture, by our work in \cite{BaBacyc}, that
the locus $\widetilde{M_g^{Pl}(\Z/m\Z)}$ could not be ES-Irreducible
only if $m$ divides $d$ or $d-1$ (this is true at least until degree
$9$ by \cite{BaBacyc}). Concerning positive characteristic, in the
last section (\S\,6) of this paper, we prove that the above examples
of non-irreducible loci are also valid when $K$ is an algebraically
closed field of positive characteristic $p>0$, provided that the
characteristic $p$ is big enough, once we fix the degree $d$.

The irreducibility of the loci $\widetilde{M_g^{Pl}(\Z/m)}$ seems to
be very deep problem. In \S 2, we give some insights that relate the
above locus with subsets in classical loci of the moduli space. In
particular, with the loci of curves of genus $g$ with a prescribed
cyclic Galois subcover. As an explicit example, we
deal with the question for the locus $\widetilde{M_6^{Pl}(\Z/8)}$,
which is ES-Irreducible, and is represented by a single normal form
with only one paramater. In \cite{BaBacyc}, we proved that
${M_g^{Pl}(G)}$ is irreducible when $G$ has an element of order
$(d-1)^2$, $d(d-1)$, $d(d-2)$ or $d^2-3d+3$, since this locus has
only one element. In particular, we prove in \cite{BaBacyc} that
$\widetilde{M_g^{Pl}(\Z/d(d-1))}$ and
$\widetilde{M_g^{Pl}(\Z/(d-1)^2)}$ are irreducible.

\subsection*{Acknowledgments} It is our pleasure to express our sincere gratitude to Xavier
Xarles and Joaquim Ro\'e for their suggestions. We also thank
Massimo Giulietti and Elisa Lorenzo for noticing us about some
bibliography on automorphism of curves. We appreciate a lot the
comments and suggestions of a referee that improved the paper to a
great extent in its present form.

\section{On the {loci} $M_g^{Pl}(G)$ and $\widetilde{M_g^{Pl}(G)}$.}

Consider a non-singular projective curve $\delta$ of genus
$g:=\frac{(d-1)(d-2)}{2}\geq 2$ over $K$ with a non-trivial finite
subgroup $G$ of $Aut(\delta)$. We always assume that $\delta$
admits a non-singular plane equation, and {$\delta\in M_g^{Pl}(G)$}.

\par {The} linear systems $g^2_d$ are
unique, up to multiplication by $P\in PGL_3(K)$ in $\mathbb{P}^2(K)$
\cite[Lemma 11.28]{Book}{. Therefore,} we can take $C$ {as a
non-singular plane} model of $\delta$, which is {defined} by a
projective plane equation $F(X;Y;Z)=0${,} and $Aut(C)$ {as} a finite
subgroup of $PGL_3(K)$ that fixes the equation $F$ and is isomorphic
to $Aut(\delta)$. Any other plane model of $\delta$ is given by
$C_{P}:\,F(P(X;Y;Z))=0$ with $Aut(C_{P})=P^{-1}Aut(C)P$ for some
$P\in PGL_3(K)${,} and $C_{P}$ is $K$-equivalent or $K$-isomorphic
to $C$. In particular, for $\delta\in M_g^{Pl}(G)$, {there} exists
$\rho:G\hookrightarrow PGL_3(K)${,} where $\rho(G)\leq Aut(C)$ and
$P^{-1}\rho(G)P\leq Aut(C_P)$.

We denote by $\rho(M_g^{Pl}(G))$ the {locus of elements} $\delta\in
M_g^{Pl}(G)$ such that $G$ acts on {some} plane model associated to
$\delta$ {as} $P^{-1}\rho(G)P$ for {some} $P\in PGL_3(K)$, and
similarly for $\rho(\widetilde{M_g^{Pl}(G)})$. {Also, d}enote by $A_G$ the quotient set $\{\rho:G\hookrightarrow
PGL_3(K)\}/\sim${,} where $\rho_1\sim\rho_2$ if and only if $\exists
P\in PGL_3(K)$ such that $\rho_1(G)=P^{-1}\rho_2(G)P$.

Clearly $M_g^{Pl}(G)=\cup_{[\rho]\in A_G} \rho(M_g^{Pl}(G))${, where
$[\rho]$ denotes the class of $\rho$ in $A_G$}.

\begin{lem} The {locus} $\widetilde{M_g^{Pl}(G)}$ is the disjoint
union of $\rho(\widetilde{M_g^{Pl}(G)})$ where $[\rho]$ runs
{through $A_G$}.
\end{lem}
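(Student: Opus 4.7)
The plan is to split the statement into two parts: coverage (every element of $\widetilde{M_g^{Pl}(G)}$ lies in some $\rho(\widetilde{M_g^{Pl}(G)})$) and disjointness (if an element lies in two such sets indexed by $[\rho_1]$ and $[\rho_2]$, then $[\rho_1]=[\rho_2]$ in $A_G$). The coverage part is essentially inherited from the decomposition $M_g^{Pl}(G)=\bigcup_{[\rho]\in A_G}\rho(M_g^{Pl}(G))$ already recorded above: given $\delta\in \widetilde{M_g^{Pl}(G)}\subseteq M_g^{Pl}(G)$, pick a plane model $C$ and use the fact that $Aut(C)$ is a finite subgroup of $PGL_3(K)$ isomorphic to $Aut(\delta)\cong G$; set $\rho$ to be any isomorphism $G\xrightarrow{\sim}Aut(C)\hookrightarrow PGL_3(K)$, and $\delta$ then lies in $\rho(\widetilde{M_g^{Pl}(G)})$ by definition (with $P=1$).

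For disjointness, I would take $\delta\in \rho_1(\widetilde{M_g^{Pl}(G)})\cap \rho_2(\widetilde{M_g^{Pl}(G)})$ and produce $P\in PGL_3(K)$ with $P^{-1}\rho_1(G)P=\rho_2(G)$. By hypothesis there exist plane models $C_1,C_2$ of $\delta$ and transformations $P_1,P_2\in PGL_3(K)$ such that
\[
Aut(C_i)=P_i^{-1}\rho_i(G)P_i,\qquad i=1,2,
\]
where the equalities (not merely inclusions) are available precisely because $\delta\in\widetilde{M_g^{Pl}(G)}$. Since the linear system $g^2_d$ on $\delta$ is unique up to $PGL_3(K)$ by \cite[Lemma 11.28]{Book}, there is a single $Q\in PGL_3(K)$ with $C_2=C_{1,Q}$, hence $Aut(C_2)=Q^{-1}Aut(C_1)Q$. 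Plugging in the two expressions above yields
\[
P_2^{-1}\rho_2(G)P_2=Q^{-1}P_1^{-1}\rho_1(G)P_1Q,
\]
so setting $P:=P_1QP_2^{-1}\in PGL_3(K)$ gives $\rho_2(G)=P^{-1}\rho_1(G)P$, i.e.\ $\rho_1\sim\rho_2$ and $[\rho_1]=[\rho_2]$ in $A_G$.

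The only subtle point, and the one I would be careful with, is the passage from ``$\rho_i(G)$ acts on some plane model of $\delta$'' to a genuine equality $Aut(C_i)=P_i^{-1}\rho_i(G)P_i$: this is exactly what distinguishes $\widetilde{M_g^{Pl}(G)}$ from $M_g^{Pl}(G)$ and is what makes the $\rho$-pieces disjoint here, whereas in the untilded version one only gets inclusions and the $\rho(M_g^{Pl}(G))$ can genuinely overlap (a curve with a larger automorphism group may contain several non-conjugate copies of $G$). Everything else is a direct consequence of the uniqueness of the planar embedding up to $PGL_3(K)$, and no further machinery is required.
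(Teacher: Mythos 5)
Your proof is correct and follows essentially the same route as the paper's: coverage is immediate from the definitions, and disjointness comes from observing that a curve lying in two pieces has the full automorphism group of a plane model simultaneously equal to a conjugate of $\rho_1(G)$ and of $\rho_2(G)$, which forces $\rho_1\sim\rho_2$ in $A_G$. The only difference is that you make explicit, via the uniqueness of the $g^2_d$ up to $PGL_3(K)$, the step the paper leaves implicit, namely that the two plane models are related by a single transformation $Q\in PGL_3(K)$.
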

\begin{proof} {It is clear, by definition, that $\widetilde{M_g^{Pl}(G)}=\cup_{[\rho]\in A_G} \rho(\widetilde{M_g^{Pl}(G)})$.
Moreover,} $\delta\in \rho_1(\widetilde{M_g^{Pl}(G)})\cap
\rho_2(\widetilde{M_g^{Pl}(G)})$ means that it has a plane model $C$
{such that} $Aut(C)=P_1^{-1}\rho_1(G)P_1=P_2^{-1}\rho_2(G)P_2$ for
{some} $P_1,P_2\in PGL_3(K)${. T}herefore{,} $\rho_1\sim\rho_2$.
\end{proof}
\begin{rem} If $\delta\in\rho_1(M_g^{Pl}(G))\cap\rho_2(M_g^{Pl}(G))$
with $[\rho_1]\neq[\rho_2]\in A_G$, and {$C$ is} a plane model of
$\delta$, then $Aut(C)\leq PGL_3(K)$ should have two {non-conjugate
subgroups that are isomorphic to $G$}. A detailed study of the work
of Blichfeldt in \cite{Bli} would give the list of $G$ {for which}
the decomposition $M_g^{Pl}(G)=\cup_{[\rho]\in A_G}
\rho(M_g^{Pl}(G))$ may not be disjoint {(if any)}.
\end{rem}

Fix {a} $[\rho]\in A_G$ then we can associate {infinitely many
non-singular plane models to} $\delta \in \rho(M_g^{Pl}(G))$, which
are $K$-isomorphic through a change of variables $P\in PGL_3(K)${.}
But we can consider only the models such that $G$ is identified
{with} $\rho(G)\leq PGL_3(K)$ for some $\rho$ in $[\rho]\in A_G$
as{, the full} automorphism group. Under this restriction, $\delta$
can be associated with a non-empty 
family of non-singular models such that any two {of
them} are isomorphic, through a projective transformation $P$ {that
satisfies} $P^{-1}\rho(G)P=\rho(G)$.

 Recall that, it is a necessary condition{,} for a projective plane curve of degree $d$ to be non-singular{,} that the defining equation of any model has degree $\geq d-1$ in each variable. For a non-zero monomial $cX^iY^jZ^k$, {its exponent is defined to be} $max\{i,j,k\}$. For a homogeneous polynomial $F$, the core of $F$ is
defined as the sum of all terms of $F$ with the greatest exponent.
{Now,} we can assume, through a diagonal change of variables, that a
non-singular plane model {(whenever it exists) has only monic
monomials in its core}. Consequently, we reduce the {situation} to
{a} set of $K$-isomorphic non-singular plane models
${\{F_{C}(X;Y;Z)=0\}}$ associated to $\delta${, where $\rho(G)$
leaves invariant} the equation (being a subgroup of automorphisms of
such model){,} and each term of the core of {$F_{C}(X;Y;Z)$} is
monic.

\begin{lem} {Consider} $G$, a non-trivial finite group{, and an injective representation} $\rho:G\hookrightarrow
PGL_3(K)$ {of $G$} such that $\rho(M_g^{Pl}(G))$ is non-empty. There
exists a single normal form representing the {locus}
$\rho(M_g^{Pl}(G))$, i.e. {a} homogenous polynomial
$F_{\rho,G}(X;Y;Z)=0$ of degree $d$ in the variables $X,Y$ and $Z$,
{which is} endowed with {some} parameters as the coefficients of the
lower order terms ({together with} some restrictions){. M}ore
concretely, every specialization {in $K$} of the parameters of
$F_{\rho,G}$ ({with respect to} the restrictions on the parameters)
gives a {non-singular plane} model of an element ${\delta\in
}\rho(M_g^{Pl}(G))$, and viceversa {(any element
$\delta\in\rho(M_g^{Pl}(G))$ corresponds to some specialization in
$K$ of the parameters of $F_{\rho,G}$ such that the resulting plane
model of $\delta$ in $\mathbb{P}^2(K)$ is non-singular)}. A similar
statement holds for {the locus} $\rho(\widetilde{M_g^{Pl}(G)})${.
In} such case{,} we {call} $F_{\rho,G,*}$ a single normal form.
Moreover, {these} normal forms are unique up to change of the
variables $X,Y,Z$ by $P\in PGL_3(K)$.
\end{lem}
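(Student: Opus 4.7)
The plan is to take the normal form $F_{\rho,G}$ to be the generic member of the finite-dimensional linear subspace of $K[X,Y,Z]_d$ consisting of degree $d$ homogeneous polynomials whose zero locus in $\mathbb{P}^2(K)$ is preserved by $\rho(G)$; its parameters will be the coefficients in a chosen basis, and the restrictions will come from non-singularity together with the monic-core normalization recalled in the paragraph preceding the lemma.

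First I would carry out the relevant linear algebra. In characteristic zero, $\rho(G)\subset PGL_3(K)$ lifts to a finite subgroup $\widetilde{\rho(G)}\subset GL_3(K)$ (choose preimages of fixed determinant). Since a curve $\{F=0\}$ is preserved by $M\in PGL_3(K)$ if and only if $F\circ M=\chi(M)\,F$ for some scalar $\chi(M)\in K^{\times}$, asking that $\rho(G)$ act on the curve is asking that $F$ be a $\rho(G)$-semi-invariant in $K[X,Y,Z]_d$ for some character $\chi\colon\widetilde{\rho(G)}\to K^{\times}$. Each $\chi$-isotypic piece $V_d^{\chi}$ is finite-dimensional, and the hypothesis that $\rho(M_g^{Pl}(G))$ is non-empty singles out (at least) one $\chi$ whose $V_d^{\chi}$ contains a non-singular element. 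Fixing this $\chi$ and a basis $\{M_1,\ldots,M_r\}$ of $V_d^{\chi}$, I would write
\[
F_{\rho,G}(X,Y,Z)=\sum_{i=1}^{r}a_i\,M_i(X,Y,Z),
\]
so that $(a_1,\ldots,a_r)$ are the parameters.

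Next I would impose the restrictions. The non-singularity of the specialized curve is the Zariski-open condition that a suitable discriminant of $F_{\rho,G}$ does not vanish. The ``core is monic'' reduction recalled just before the lemma exploits the residual diagonal freedom inside the normalizer of $\rho(G)$ in $PGL_3(K)$ to set the coefficients of the core monomials equal to $1$; this normalizes a prescribed subset of the $a_i$ to $1$ and leaves the rest (the ``lower order'' coefficients) as free parameters, subject to the non-singularity condition and a finite list of closed conditions coming from the monic-core choice. Conversely, given any $\delta\in\rho(M_g^{Pl}(G))$, the preceding discussion exhibits a plane model of $\delta$ whose defining equation is a $\chi$-semi-invariant with monic core, hence a specialization of $F_{\rho,G}$; this establishes the correspondence between valid specializations and elements of $\rho(M_g^{Pl}(G))$. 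To pass to $\rho(\widetilde{M_g^{Pl}(G)})$ one further removes the finitely many closed subloci on which the specialized curve is fixed by some strictly larger subgroup $H\supsetneq \rho(G)$ in $PGL_3(K)$, yielding the companion form $F_{\rho,G,*}$.

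For the uniqueness clause, any two normal forms for the same $\rho(M_g^{Pl}(G))$ must span the same subspace $V_d^{\chi}\subset K[X,Y,Z]_d$ up to the diagonal rescaling used in the monic-core normalization, so they are related by an element of the normalizer of $\rho(G)$ in $PGL_3(K)$, which is itself a change of variables in $PGL_3(K)$. The main obstacle I anticipate is the bookkeeping around the possible choices of character $\chi$ and of monic-core normalization: one must check that distinct choices are compatible (identified via the normalizer action), so that $\rho(M_g^{Pl}(G))$ is represented by a \emph{single} normal form and not by a disjoint union of several. Once these discrete choices are reconciled, the construction of $F_{\rho,G}$ and $F_{\rho,G,*}$ and their $PGL_3(K)$-uniqueness follow.
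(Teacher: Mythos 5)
Your construction is essentially the paper's own: the paper likewise takes as $F_{\rho,G}$ the generic $\rho(G)$-invariant degree-$d$ form with coefficients as parameters --- built concretely by diagonalizing an element $\sigma\in G$ of maximal order and keeping exactly the monomials compatible with a common character (your $\chi$-isotypic space, computed monomial by monomial), then imposing the remaining generators of $G$ as algebraic relations among the coefficients --- with non-singularity as the open restriction, the monic-core/diagonal normalization, and, for $\rho(\widetilde{M_g^{Pl}(G)})$, the complement of the constraints coming from each larger realizable group $H\supseteq\rho(G)$. The one step you flag but leave open, reconciling the different possible characters $\chi$ into a \emph{single} form via the normalizer of $\rho(G)$, is exactly the point the paper also settles only implicitly (``unique up to $K$-equivalence by construction,'' after fixing the diagonal shape of $\rho(\sigma)$ and conventions such as which reference points satisfy the equation), so your proposal matches the paper's argument in both route and level of detail.
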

\begin{proof} Let $\sigma\in G$ be an automorphism of maximal order $m>1${,} and choose an element $\rho$ in $[\rho]\in {A_G}$ such that $\rho(\sigma)$ is diagonal of the form $diag(1,\xi_m^a,\xi_m^b)$
{where} $0\leq a<b${, and} $\xi_m$ a primitive $m$-th root of unity
in $K$. Following the same technique in \cite{Dolgachev} or {in}
\cite{BaBacyc} (for a general discussion), we can associate to the
set $\rho(M_g^{Pl}(<\sigma>)$ a non-singular plane equation $F_{m,
(a,b)}(X;Y;Z)$ with a certain set of parameters ({under some
algebraic} restrictions in order to ensure the non-singularity).
{This is a ``normal form'' for $\rho(M_g^{Pl}(<\sigma>))$, and it is
also unique(up to $K$-equivalence) by construction}. For example,
{if $0<a<b<m$ and all the reference points
$\{[1:0:0],[0:1:0],[0:0:1]\}$ satisfy the normal form
$F_{m,(a,b)}(X;Y;Z)=0$, we deduce that }$F_{m,(a,b)}(X;Y;Z)$ is $
X^{d-1}Y+Y^{d-1}Z+
Z^{d-1}X+\sum_{j=2}^{\lfloor\frac{d}{2}\rfloor}\,\,\big(X^{d-j}L_{j,X}+Y^{d-j}L_{j,Y}+Z^{d-j}L_{j,Z}\big),$
where $L_{j,B}$ is a homogenous polynomial of degree $j$ {in the
variables $\{X,Y,Z\}\setminus\{B\}$, and the parameters' list is
included} as the coefficients of {its} monomials. The first three
factors implies that $a\equiv(d-1)a+b\equiv(d-1)b\,(mod\,m)${. In
particular,} $m|d^2-3d+3$. The defining equation $F_{m,(a,b)}$ in
such situation, follows immediately by checking monomials'
invariance in each $L_{j,B}$. For example, rewrite $L_{j,X}$ as
$\sum_{i=0}^j \beta_{j,i}Y^iZ^{j-i}${, where $\beta_{j,i}$ are
parameters in $K$}. Then, $\beta_{j,i}=0$ if $m\nmid ai+(j-i)b$,
since $diag(1;\xi_m^a;\xi_m^b)\in Aut(C)$. Observe that{,} in order
to obtain such $F_{m,(a,b)}${,} we {choose a model $C$} for
any $\delta\in \rho(M_g^{Pl}(G))$ {that satisfies the condition}
$\rho(\sigma)=diag(1;\xi_m^a;\xi_m^b)$, and the monomials of the
core are monic. {Secondly, we impose that $<\sigma>\subseteq Aut(C)$
to get the required unique expression}.

Now, {to  move} from $\rho(<\sigma>)$ to $\rho(G)$, we assume {a}
generator $u_G$ of $G${,} which does not belong to $<\sigma>${,} and
{then we use the fact that} $\rho(u_G)$ {retains} invariant the
defining equation {$F_{m,(a,b)}=0$} {to obtain} some specific
algebraic relations between the parameters of $F_{m,(a,b)}${. T}hen,
$F_{\rho,G}$ is obtained from $F_{m,(a,b)}$ by {repeating the
procedure for each such generator {$u_G$} and imposing the algebraic
relations between the coefficients of the monomials (i.e. the
parameters' list)} of $F_{m,(a,b)}$. {By a similar argument, we
obtain $F_{\rho,G,*}$ from $F_{\rho,G}$. In fact, for a finite group
$H$ such that $\rho(G)\leq H\leq PGL_3(K)$, and for which there
exists a non-singular plane curve of genus $g$ whose automorphism
group is isomorphic to $H$, we need to apply the process above for
the generators of $H$ that are not in $\rho(G)$. Therefore, we only
need to consider a complement of certain algebraic constraints so
that $\delta$ does not have a bigger automorphism group {than} $H$.}

\end{proof}
\begin{rem} {It} could happen that two different
specializations of $F_{\rho,G}$ {in} $K$ {give non-singular plane}
models {that correspond to} the same curve $\delta\in
\rho(M_g^{Pl}(G))${. T}his happens if {there} exists {a
transformation} $P$ from one model to {the other with}
$P^{-1}\rho(G)P=\rho(G)${,} and
$P^{-1}\rho(<\sigma>)P=\rho(<\sigma>)$. {We can ensure that this
phenomena will not occur by assuming more restrictions to the
parameters of $F_{\rho,G}$, but we did not do this to our notion
``normal form''. These further assumptions became recently explicit
for the loci $\rho(M_3^{Pl}(G))$ through fixing the missing details
in the tables of Henn \cite{He}, by Lorenzo \cite{Lo}. We also
investigated such restrictions particularly for the loci
$\rho(M_6^{Pl}(\Z/8))$ and $\rho(\widetilde{M_6^{Pl}(\Z/8)})$ at the
end of this section}.
\end{rem}

It is difficult to determine the groups $G$ and $[\rho]\in A_G$ such
that $\rho(M_g^{Pl}(G))$ is non-empty for some fixed $g$. Henn
\cite{He} obtained this determination for $g=3$, Badr-Bars
\cite{BaBaaut} for $g=6${,} and for a general implementation of any
degree {$d\geq5$}, we refer to \cite{BaBacyc} (in which we formulate
an algorithm to determine the $\rho$'s when $G$ is cyclic).

\begin{defn} Write $M_g^{Pl}(G)$ as $\cup_{[\rho]\in A_G}\rho(M_g^{Pl}(G))$, we define the number
of the equation components of $M_g^{Pl}(G)$ to be the number of
elements $[\rho]\in A_G$ such that $\rho(M_g^{Pl}(G))$ is not empty.
We say that $M_g^{Pl}(G)$ is equation irreducible if {it is not
empty and} $M_g^{Pl}(G)=\rho(M_g^{Pl}(G))$ for a certain $[\rho]\in
A_G$. {Similar notion arises for the locus}
$\widetilde{M_g^{Pl}(G)}=\cup_{[\rho]\in
A_G}\rho(\widetilde{M_g^{Pl}(G)})${. W}e define the number of the
strongly equation irreducible components of
$\widetilde{M_g^{Pl}(G)}$ to be the number of the elements
$[\rho]\in A_G$ such that $\widetilde{\rho(M_g^{Pl}(G))}$ is not
empty. We say that $\widetilde{M_g^{Pl}(G)}$ is equation strongly
irreducible (or simply, ES-irreducible) if it is not empty and
$\widetilde{M_g^{Pl}(G)}=\widetilde{\rho(M_g^{Pl}(G))}$ for some
$[\rho]\in A_G$.
\end{defn}

Of course, if $\widetilde{M_g^{Pl}(G)}$ is not ES-irreducible then
it is not irreducible and the number of the strongly irreducible
equation components of $\widetilde{M_g^{Pl}(G)}$ is a lower bound
for the number of {its} irreducible components.

In this language, we can formulate the main result in \cite{He} as
follows:
\begin{thm}[Henn, Komiya-Kuribayashi] If $G$ is a non-trivial group that appears as the full
automorphism group of a non-singular plane  curve of degree $4$,
then $\widetilde{M_3^P(G)}$ is ES-Irreducible.
\end{thm}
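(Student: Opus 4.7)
The plan is to derive ES-Irreducibility directly from the explicit classifications carried out by Henn \cite{He} and Komiya-Kuribayashi \cite{KK}, re-phrased in the terminology of Section 2 of the present paper. The conceptual reduction is simple: for a smooth plane quartic, the unique $g^2_4$ (cited in Lemma 11.28 of \cite{Book}) is the canonical linear system, so every $\delta\in\widetilde{M_3^{Pl}(G)}$ admits a plane model $C\subset\mathbb{P}^2(K)$ unique up to projective transformation, and therefore its automorphism group inside $PGL_3(K)$ is determined up to conjugation. By Lemma 2.3, ES-Irreducibility of $\widetilde{M_3^{Pl}(G)}$ is then equivalent to showing that the set $A_G$ contains exactly one class $[\rho_G]$ for which $\rho_G(\widetilde{M_3^{Pl}(G)})$ is non-empty.

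First I would list, following Henn's tables (with the minor rectifications of Lorenzo \cite{Lo}), the finitely many abstract groups $G$ that arise as the full automorphism group of a smooth plane quartic: several cyclic groups and small non-abelian groups, together with the distinguished groups of orders $16$, $48$, $96$, and $168$, the last being $PSL(2,7)$ acting on the Klein quartic. For each such $G$, the Henn/Komiya-Kuribayashi classification produces a single explicit $G$-invariant equation (depending on a set of parameters with algebraic restrictions) whose specializations sweep out $\widetilde{M_3^{Pl}(G)}$. Re-reading this in the present notation, the existence of such a single equation is exactly a single normal form $F_{\rho_G,G,*}$ in the sense of Lemma 2.3; so each entry of their list already witnesses that $\widetilde{M_3^{Pl}(G)}=\rho_G(\widetilde{M_3^{Pl}(G)})$ for some $[\rho_G]\in A_G$.

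To complete the argument independently of whether the classification is exhaustive in its enumeration of representations, I would combine it with Blichfeldt's list \cite{Bli} of finite subgroups of $PGL_3(K)$. For each $G$ on Henn's list and each conjugacy class of embeddings $G\hookrightarrow PGL_3(K)$ realizable inside a Blichfeldt subgroup, one checks by direct computation of the space of $G$-invariant quartics whether there is a smooth invariant curve whose full automorphism group equals $G$. For the large groups ($S_4$, the order-$16$ and order-$96$ groups, and $PSL(2,7)$) the relevant faithful $3$-dimensional projective representation is essentially unique by character-theoretic considerations, so only one class can survive.

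The main obstacle is the case analysis for the small cyclic and small non-abelian groups, where \emph{a priori} several non-conjugate embeddings exist (for instance the two classes of order-$2$ elements in $PGL_3(K)$, given by $\mathrm{diag}(1,1,-1)$ and $\mathrm{diag}(1,-1,-1)$). In each such case one must show that only one embedding is realized as the \emph{full} automorphism group of some smooth quartic: either the other embedding fixes no smooth quartic, or every smooth quartic it fixes has a strictly larger automorphism group, which moves $\delta$ out of $\widetilde{M_3^{Pl}(G)}$ into $\rho(M_3^{Pl}(H))$ for a suitable $H\supsetneq G$. This bookkeeping, which is exactly what Henn and Komiya-Kuribayashi carry out table-by-table, is where the bulk of the work lies.
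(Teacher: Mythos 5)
Your proposal takes essentially the same route as the paper: the paper offers no independent argument for this statement but presents it as a direct reformulation, in the normal-form/ES-irreducibility language of Section 2, of the classification tables of Henn \cite{He} and Komiya--Kuribayashi \cite{KK} (with Lorenzo's corrections \cite{Lo}), which is exactly the reduction you describe. Your observation that for the small cyclic cases the competing embeddings only produce quartics with strictly larger automorphism groups is precisely the phenomenon the paper records (e.g.\ in its remark on $M_3^{Pl}(\Z/3\Z)$ having two equation components, one of which always carries $S_3$), so the bookkeeping you defer to the tables is where the paper also locates the work.
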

\begin{rem} Henn in \cite{He}, observed that $M_3^{Pl}(\Z/3)$ {has already} two
irreducible equation components{. B}ut one of {these} components has
always a bigger automorphism group namely, $S_3$ the symmetry group
of of order 3.
\end{rem}

To finish this section, we state some natural questions concerning
the locus $\rho(M_g^{Pl}(G))${,} and similar questions can be
{formulated} for $\rho(\widetilde{M_g^{Pl}(G)})$ with different loci
{in the moduli space} of genus $g$ curves:
\begin{preg} Is it true{, for} all the elements of $\rho(M_g^{Pl}(G))${,} {that} the corresponding
Galois covers $\delta\rightarrow \delta/G$ have {a} fixed
ramification data?
\end{preg}
We believe that the answer to this question for $K=\mathbb{C}$ (i.e.
Riemann surfaces) should be always true from the work of Breuer
\cite{Bre}. See Remark \ref{rem2} for the explicit Galois subcover
and the ramification data {of} the locus
$\rho{(M_6^{Pl}(\Z/4\Z))}${,} and \S2.1 for the {locus}
$\rho(M_6^{Pl}(\Z/8\Z))$.

\begin{preg} Is $\rho(M_g^{Pl}(G))$ an irreducible set when $G$ is
a cyclic group?
\end{preg}
 It is to be noted that when $K=\mathbb{C}$, Cornalba \cite{Cor}, {for a cyclic group} $G$ of prime order,
 and Catanese \cite{Ca}, for general order, obtained that the locus
 of smooth projective {genus $g$ curves} with a cyclic Galois
 subcover of {a} group {that is} isomorphic to $G$ {and a} prescribed ramification is
 irreducible.

Concerning the irreducibility question, we prove in \cite{BaBacyc}
that if $G$ has an element of large order $(d-1)^2$, $d(d-1)$,
$d(d-2)$ or $d^2-3d+3$ then $\rho(M_g^{Pl}(G))$ has at most one
element{. T}herefore, is irreducible. At \S2.1, we deal {with the}
irreducibility {of} the ES-Irreducible {locus} $M_6^{Pl}(\Z/8\Z)${,}
where the single ``normal form'' has only one parameter.

Moreover, Catanese, L\"onne and Perroni in \cite[\S2]{CLP} define a
topological invariant for the loci $M_g(G)$, which is trivial if it
is irreducible.
\begin{preg}
Consider a non-trivial group {$G$ such that} the set $A_G$ is given
by one element (see {the} next section for {such groups}). Is it a
necessary condition {that the topological invariant in
\cite[\S2]{CLP} is trivial in order to be irreducible}? Is it true
that {the loci} $M_g^{Pl}(G)$ are irreducible?
\end{preg}

%
%
%

\subsection{The loci $M_6^{Pl}(\Z/8)$ and
$\widetilde{M_6^{Pl}(\Z/8\Z)}$} \mbox{}\\

Consider{, in the moduli} $M_6$, an element $\delta$ {that} has a
non-singular plane model with an effective action of the cyclic
group of order 8. {In other words,} $\delta\in M_6^{Pl}(\Z/8\Z)$.
Following \cite{BaBacyc}, \cite{Dolgachev} or the table {in} \S4
{of} this note, {one find that}
$M_6^{Pl}(\Z/8)=\rho(M_6^{Pl}(\Z/8))$ with
$\rho(\Z/8\Z)=<diag(1,\xi_8,\xi_8^4)>$, where $\xi_8$ is a $8$-th
primitive root of unity in $K${. Moreover,} such loci has
$X^5+Y^4Z+XZ^4+\beta X^3Z^2=0$ {as} a ``normal form'' with {a
parameter $\beta$ {that takes values in $K$}} such
that $\beta\neq\pm 2$ {(to ensure the non-singularity)}. Therefore,
we can associate to $\delta$ a {fixed} plane non-singular model of
the form $X^5+Y^4Z+XZ^4+\beta_{\delta} X^3Z^2=0$ for {some}
$\beta_{\delta}\in K$ ({there is no guarantee that} $\beta_{\delta}$
is unique in $K$).

\par Now, let us compute all {the} non-singular plane models of the form
$X^5+Y^4Z+XZ^4+\beta X^3Z^2=0$ that can be associated to the fixed
curve $\delta$. {These models are} obtained by a change of the
variables $P\in PGL_3(K)$ such that
$P^{-1}<(diag(1,\xi_8,\xi_8^4)>P=<diag(1,\xi_8,\xi_8^4)>$, and the
new model has a similar {defining equation of the} form
$X^5+Y^4Z+XZ^4+\beta'X^3Z^2=0$.
\par Without any loss of generality, we can suppose that
$P^{-1}diag(1,\xi_8,\xi_8^4)P=diag(1,\xi_8,\xi_8^4)${. Hence,} in
order to have the same eigenvalues which are pairwise distinct, we
may assume that $P$ is a diagonal matrix, say
$P=diag(1,\lambda_2,\lambda_3)$. Therefore, we get an equation of
the form: $X^5+\lambda_2^4\lambda_3 Y^4Z+\lambda_3^4
XZ^4+\beta_{\delta}\lambda_3^2X^3Z^2=0$. From which we must have
$\lambda_2^4\lambda_3=\lambda_3^4=1$, thus $\lambda_3^2$ is 1 or -1.
{Consequently,} we obtain a bijection map
$$\varphi:M_6^{Pl}(\Z/8\Z)\rightarrow \mathbb{A}^1(K)\setminus\{-2,2\}/\sim$$
$$\delta\mapsto [\beta_{\delta}]=\{\beta_{\delta},-\beta_{\delta}\}$$
where $a\sim b\Leftrightarrow b=a\ or\ a=-b$. {Furthermore, by our
work in} \cite{BaBaaut}, we know that $X^5+Y^4Z+XZ^4+\beta X^3Z^2=0$
has a bigger automorphism group than $\Z/8\Z$ if and only if
$\beta=0${. T}herefore, we have a bijection map
$$\tilde{\varphi}:\widetilde{M_6^{Pl}(\Z/8\Z)}\rightarrow \mathbb{A}^1(K)\setminus\{-2,0,2\}/\sim$$
$$\delta\mapsto [\beta_{\delta}]=\{\beta_{\delta},-\beta_{\delta}\}.$$
The above sets {are irreducible when} $K$ is the complex field.

{On the other hand}, if we consider the Galois cyclic cover of
degree 8 {that is} given by the action of the automorphism of order
8 on $X^5+Y^4Z+XZ^4+\beta X^3Z^2=0$, we obtain that it ramifies at
the points {$[0:1:0]$ and $[0:0:1]$} with ramification index 8, as
well as the four points {$[1:0:h],$} where $1+h^4+\beta h^2=0$ with
ramification index 2, if $\beta\neq \pm 2$. That is,
$M_6^{Pl}(\Z/8\Z)$ is inside the locus of curves {of the moduli
space} $M_6$ {that} have a cyclic Galois subcover of degree 8 to a
genus zero curve, and {also ramify} at {six} points ({two of them
are} with ramification index 8, and the other four points are with
ramification index 4).

\section{Preliminaries on automorphism on plane curves}
{Consider a curve} $\delta\in M_g^{Pl}$ {whose} $Aut(\delta)$ {is}
non-trivial{, and $C$ is a fixed non-singular, degree $d$ plane
model of $\delta$.} By an abuse of notation (once and for all), we
denote also by $C$ a non-singular projective plane curve {in
$\mathbb{P}^2$}. Then, $Aut(C)$ is a finite subgroup of $PGL_3(K)$,
and it satisfies one of the following situations (for more details,
see Mitchell \cite{Mit}):

\begin{enumerate}
\item fixes a point $Q$ and a line $L$ with $Q\notin L$ in
$PGL_3(K)$,
\item fixes a triangle {(i.e. a set of three non-concurrent lines)},
\item $Aut(C)$ is conjugate to a representation inside $PGL_3(K)$ of
one of the finite primitive groups namely, the Klein group
$PSL(2,7)$, the icosahedral group $A_5$, the alternating group
$A_6$, the Hessian group $Hess_{216}$ or to one of its subgroups $Hess_{72}$ or $Hess_{36}$.
\end{enumerate}

{Recall that the exponent of a non-zero monomial $cX^iY^jZ^k$ is
defined to be} $max\{i,j,k\}$. For a homogeneous polynomial $F$, the
core of $F$ is defined as the sum of all terms of $F$ with the
greatest exponent. Let $C_0$ be a smooth plane curve, a pair
$(C,\underline{G})$ with $\underline{G}\leq Aut(C)$ is said to be a
descendant of $C_0$ if $C$ is defined by a homogeneous polynomial
whose core is a defining polynomial of $C_0$ and $\underline{G}$
acts on $C_0$ under a suitable {change of the} coordinate system.

\begin{thm}[Harui] (see \cite{Harui} $\S 2$) \label{teo1}
Let $\underline{G}$ be a subgroup of $Aut(C)$ {where $C$ is a
non-singular plane curve of degree $d\geq4$}. Then $\underline{G}$
satisfies one of the following statements:
\begin{enumerate}
  \item $\underline{G}$ fixes a point on $C$ and then is cyclic.
  \item $\underline{G}$ fixes a point not lying on $C$ and it satisfies a short exact sequence of the form
  $$1\rightarrow N\rightarrow \underline{G}\rightarrow G'\rightarrow 1,$$
{where $N$ a cyclic group of order dividing $d$ and $G'$ (which is a
subgroup of $PGL_2(K)$) is conjugate to a cyclic group $\Z/m\Z$ of
order $m$ with $m\leq d-1$, a Dihedral group $D_{2m}$ of order $2m$
where $|N|=1$ or $m|(d-2)$, the alternating groups $A_4$, $A_5$ or
the symmetry group $S_4$.}
\item $\underline{G}$ is conjugate to a subgroup of $Aut(F_d)$, where $F_d$ is the Fermat curve $X^d+Y^d+Z^d$.
In particular, $|\underline{G}|\,|\,6d^2$ and $(C,\underline{G})$ is a descendant of $F_d$.
\item $\underline{G}$ is conjugate to a subgroup of $Aut(K_d)$,
where $K_d$ is the Klein curve curve $X^{d-1}Y+Y^{d-1}Z+Z^{d-1}X$.
Hence, $|H|\,|\,3(d^2-3d+3)$ and $(C,H)$ is a descendant of $K_d$.
\item $\underline{G}$ is conjugate to a finite primitive subgroup of $PGL_3(K)$ {that are mentioned above}.
\end{enumerate}
\end{thm}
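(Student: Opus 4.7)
The plan is to invoke the Mitchell trichotomy for finite subgroups of $PGL_3(K)$ already recalled in the excerpt: $\underline{G}\leq Aut(C)\leq PGL_3(K)$ either (i) fixes a point $Q$ and a line $L$ with $Q\notin L$, (ii) fixes a triangle, or (iii) is conjugate to one of the primitive groups. Case (iii) is immediately statement (5), so the proof reduces to refining (i) into statements (1)--(2) according to the position of $Q$ with respect to $C$, and (ii) into statements (3)--(4) according to the shape of the core of a defining equation of $C$.

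For Mitchell case (i), I would split on whether $Q\in C$. If $Q\in C$, then $\underline{G}$ stabilizes a smooth point of $C$, and each element is determined by its differential on the tangent line $T_QC$; this yields a faithful character $\underline{G}\to K^{*}$, so $\underline{G}$ is cyclic, proving (1). If $Q\notin C$, let $N$ be the kernel of the restriction homomorphism $\underline{G}\to PGL(L)\cong PGL_2(K)$. In coordinates with $Q=[0:0:1]$ and $L=\{Z=0\}$, elements of $N$ are homologies $\mathrm{diag}(1,1,\lambda)$, so $N$ is cyclic. A generator of $N$ permutes the $d$ points of $L\cap C$ via the scalar $\lambda$, forcing $\lambda^d=1$, whence $|N|\mid d$. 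The image $G'\leq PGL_2(K)$ is then forced by Dickson's classification to be cyclic, dihedral, $A_4$, $A_5$, or $S_4$; combining this with the requirement that the combined action preserves a smooth plane curve of degree $d$ produces the bound $m\leq d-1$ and, in the dihedral case, the divisibility $m\mid d-2$ needed for (2).

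For the triangle case (ii), I fix coordinates so that the sides of the triangle are $X=0$, $Y=0$, $Z=0$. Then $\underline{G}$ sits in a short exact sequence $1\to D\to \underline{G}\to \Sigma\to 1$, where $D$ is its diagonal part and $\Sigma\leq S_3$ permutes the three vertices. Invariance of a defining polynomial $F$ under $D$ forces every monomial of $F$ to be a $D$-eigenvector of the same weight. Passing to the sum of terms of highest exponent, the non-singularity condition (each variable appears with degree at least $d-1$) combined with the $\Sigma$-symmetry leaves, up to permutation of variables, exactly two possibilities for the core: the Fermat polynomial $X^d+Y^d+Z^d$ and the Klein polynomial $X^{d-1}Y+Y^{d-1}Z+Z^{d-1}X$. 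In the first case $(C,\underline{G})$ is a descendant of $F_d$ and $|\underline{G}|$ divides $|Aut(F_d)|=6d^2$, proving (3); in the second case $(C,\underline{G})$ is a descendant of $K_d$ and $|\underline{G}|$ divides $|Aut(K_d)|=3(d^2-3d+3)$, proving (4).

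The main obstacle I anticipate is the triangle case: identifying \emph{which} monomials can appear in the core of $F$, and thus the clean Fermat versus Klein dichotomy, demands a careful enumeration of the eigenvalue relations imposed by $D$ and $\Sigma$ together with the non-singularity constraint on the degree in each variable. The bounds in case (2) are similarly delicate, since lifting a subgroup $G'\leq PGL_2(K)$ to one that preserves a non-singular plane curve of degree $d$ translates into arithmetic conditions on the eigenvalues of the lift, and this is precisely where the divisibility by $d-2$ in the dihedral subcase enters.
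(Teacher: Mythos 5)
The paper itself offers no proof of this theorem: it is quoted verbatim from Harui \cite{Harui}, so your proposal has to be judged as a reconstruction of Harui's argument. Your skeleton (Mitchell's trichotomy, then refine the point-and-line case according to whether the fixed point lies on $C$, and the triangle case via the core) is the right one, and your treatment of item (1) is essentially fine, but two steps contain genuine gaps. First, a small but real error: a generator of $N$ does \emph{not} ``permute the $d$ points of $L\cap C$'' --- a homology $\mathrm{diag}(1,1,\lambda)$ fixes its axis $L$ pointwise, so it fixes every point of $L\cap C$. The correct route to $|N|\mid d$ is either that $Q=[0:0:1]\notin C$ forces the monomial $Z^d$ to occur in $F$ while $Z\nmid F$, so invariance of $F$ under $\mathrm{diag}(1,1,\lambda)$ gives $\lambda^d=1$, or that $\langle\lambda\rangle$ acts freely on the $d$ points of $\ell\cap C$ for a \emph{general} line $\ell$ through $Q$.

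The more serious gap is in item (2). The numerical constraints there --- $m\le d-1$ in the cyclic case, and ($|N|=1$ or $m\mid d-2$) in the dihedral case --- simply do not follow from ``the lifted action preserves a smooth plane curve of degree $d$'', which is what your plan asserts. On the Fermat curve $F_d$, the cyclic group generated by $\mathrm{diag}(1,\xi_d,1)$ fixes $[0:0:1]\notin F_d$ and maps onto a cyclic subgroup of $PGL_2(K)$ of order $d>d-1$; the full stabilizer of $[0:0:1]$ in $Aut(F_d)$ maps onto a dihedral group $D_{2d}$ with $|N|=d$ and $d\nmid d-2$. These examples do not contradict the theorem because such groups fall under item (3), but they show that item (2) cannot be obtained by refining Mitchell's point-and-line case in isolation: the actual proof must show that whenever the stated bounds fail, $(C,\underline{G})$ is a descendant of $F_d$ or $K_d$ (or $\underline{G}$ is primitive), i.e.\ items (2), (3), (4) have to be established together, and this interlocking analysis --- the bulk of Harui's work --- is absent from your sketch. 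A similar caveat applies to your triangle case: you must first argue that the permutation part contains a $3$-cycle (otherwise a vertex and the opposite side are fixed and you are back in the previous case), eliminate mixed cores such as ones containing both $X^{d-1}Y$ and $X^{d-1}Z$ (which force the diagonal part to consist of homologies), and then actually conjugate $\underline{G}$ into $Aut(F_d)$ or $Aut(K_d)$ rather than merely identify the core.
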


\textbf{The Hessian group:} {The representations of the Hessian
group of order $216$ $Hess_{216}$ inside $PGL_3(K)$ forms a unique
set, up to conjugation (see Mitchell \cite{Mit} page $217$).} A
representation of $Hess_{216}$ in $PGL_3(K)$ is given by
$Hess_{216}=<S,T,U,V>$ {where}
$$S=\left(
\begin{array}{ccc}
1&0&0\\
0&\omega&0\\
0&0&\omega^2\\
\end{array}
\right),\ \ U=\left(
\begin{array}{ccc}
1&0&0\\
0&1&0\\
0&0&\omega\\
\end{array}
\right),\ V=\frac{1}{\omega-\omega^2}\left(
\begin{array}{ccc}
1&1&1\\
1&\omega&\omega^2\\
1&\omega^2&\omega\\
\end{array}
\right),\ \ T=\left(
\begin{array}{ccc}
0&1&0\\
0&0&1\\
1&0&0\\
\end{array}
\right).$$ {Here} $\omega$ is a primitive 3rd root of unity. Also,
we consider the primitive {Hessian} subgroups of order 36,
$Hess_{36}$ (one of them is $<S,T,V>$), and the primitive subgroup
of order 72, $Hess_{72}=<S,T,V,UVU^{-1}>$.

{For the above fixed representation, there} are exactly {three}
primitive subgroups of order 36 (see \cite{Grove}), which are {also}
normal in $Hess_{72}${. Moreover,} the Hessian subgroup $Hess_{72}$
is normal in $Hess_{216}$. {Furthermore, we recall,} by Grove in
\cite[\S 23,p.25]{Grove} and by Blichfeldt in \cite{Bli} (see also
\cite[\S1]{HaLe} for the statement of Blichfeldt's result of our
interest) that any representation of $Hess_{216}$ corresponds
geometrically to a certain subgroup fixing four triangles (having 18
elements), and the alternating group $A_4$ acting in such four
triangles. Moreover, any representation of the primitive subgroups
of order 36 or 72 is obtained by the group of 18 elements fixing the
four triangles {together} with certain permutations on the four
triangles ({equivalently, with} certain subgroups of $A_4$). {On the
other hand, it follows,} by Blichfeldt (see \cite[\S1, on type
(E),(F),(G)]{HaLe}), that such Hessian groups {are represented in
$PGL_3(K)$, up to conjugation, with respect to the representation
described above. Therefore,} any injective representation of
$Hess_{36}$ or $Hess_{72}$ in $PGL_3(K)$ extends to an injective
representation of $Hess_{216}$, and moreover the three different
subgroups of $Hess_{36}$ in any representation are conjugate to
$<S,T,V>$.
{Consequently, we conclude that} the representations of $Hess_{*}$
with $*\in\{36,72,216\}$ inside $PGL_3(K)$ form a unique set, up to
conjugation.

\begin{rem} \label{rem1}
  In particular, for the Hessian groups $Hess_{216}$, $Hess_{72}$
and $Hess_{36}$, the locus $\widetilde{M_g^{Pl}(Hess_*)}${, where
$*\in\{36,72,216\}$} is ES-Irreducible as long as {it} is not
empty{, since} the set $A_{Hess_*}$ is trivial ({we follow the same
notations} of \S2).
\end{rem}

{Our} interest in investigating {whether} the locus
$\widetilde{M_g^{Pl}(G)}$ is ES-irreducible or not, and the
classical result of Klein {concerning} the uniqueness (up to
conjugation) {of the} finite subgroups inside $PGL_2(K)$, {motivate
us to ask} the following question in group theory.
\begin{preg} {Is it true that there exists $\underline{G}$, a non-cyclic finite subgroup of $PGL_3(K)$, such that the set
$A_{\underline{G}}$ has at least two elements?}
\end{preg}

\section{Cyclic groups in smooth plane curves of degree 5 and $\widetilde{M_6^{Pl}(\Z/m\Z)}$.}
{We} study non-singular plane curves $C:\,F(X;Y;Z)=0$ of degree
$d\geq4$ such that $Aut(C)$ is non-trivial,  up to $K$-isomorphism{.
T}hat is, two of them are $K$-isomorphic if one transforms to the
other {through} a change of variables $P\in PGL_3(K)$, and we denote
by $C_P$ the plane curve $F(P(X;Y;Z))=0$.

By a change of variables, we can suppose that the cyclic group of order $m$ acting on a smooth plane curve of degree $5$ is
given in $PGL_3(K)$ by a diagonal matrix $diag(1;\xi_m^a;\xi_m^b)$,
where $\xi_m$ is an $m$-th primitive root of unity, and $0\leq a<
b<m$ are positive integers. We call this element by Type $m,(a,b)$.

Following the same proof of \cite[\S 6.5]{Dolgachev} (or see
\cite{BaBacyc}, for a general treatment with an algorithm of
computation for any degree $d$), we obtain  a ``normal form''
associated to type $m,(a,b)$ corresponding to the loci
$\rho(M_6^{Pl}(\Z/m\Z))$ with
$\rho(\Z/m)=<diag(1;\xi_m^a;\xi_m^b)>$:
\begin{center}
\begin{tabular}{|c|c|}
  \hline
  Type: $m, (a,\,b)$ & $F_{m,(a,b)}(X;Y;Z)$ \\\hline\hline
  $20,(4,5)$& $X^5+Y^5+ XZ^4$ \\\hline
  $16,(1,12)$& $X^5+Y^4Z+ XZ^4$\\\hline
  $15,(1,11)$& $X^5+Y^4Z+ YZ^4$    \\\hline
$13,(1,10)$& $X^4Y+Y^4Z+ Z^4X$    \\\hline
 $10,(2,5)$& $X^5+Y^5+ XZ^4+\beta_{2,0}X^3Z^2$ \\\hline
  $8,(1,4)$& $X^5+Y^4Z+ XZ^4+\beta_{2,0}X^3Z^2$ \\\hline
 $5,(1,2)$& $X^5+Y^5+Z^5+\beta_{3,1}X^2YZ^2+\beta_{4,3}XY^3Z$    \\\hline
  $5,(0,1)$& $Z^5+L_{5,Z}$    \\\hline

  $4,(1,3)$& $X^5+X\big(Z^4+ Y^4+\beta_{4,2}Y^2Z^2\big)+\beta_{2,1}X^3YZ$\\\hline
 $4,(1,2)$& $X^5+X\big(Z^4+ Y^4\big)+\beta_{2,0}X^3Z^2+\beta_{3,2}X^2Y^2Z+\beta_{5,2}Y^2Z^3$\\\hline
  $4,(0,1)$& $Z^4L_{1,Z}+L_{5,Z}$    \\\hline
  $3,(1,2)$& $X^5+Y^4Z+ YZ^4+\beta_{2,1}X^3YZ+X^2\big(\beta_{3,0}Z^3+\beta_{3,3}Y^3\big)+\beta_{4,2}XY^2Z^2$ \\\hline
 $2,(0,1)$& $Z^4L_{1,Z}+Z^2L_{3,Z}+L_{5,Z}$    \\\hline
  \end{tabular}
\end{center}
where $L_{i,U}$ {is} a homogeneous polynomial of degree
$i$ that does not contain the variable $U$ with parameters {as} the
coefficients {of} the monomials, and $\beta_{i,j}$ are parameters
{that assume} values in $K$. It remains to introduce the algebraic
restrictions that should be imposed on the parameters $\beta_{i,j}$
so that the defining equation $F_{m,(a,b)}(X;Y;Z)=0$ is
non-singular{. T}his will be omitted.

By the above table, we find that the locus $M_6^{Pl}(\Z/m\Z)$ is not
empty only for the values $m$ which are included in the previous
list. Moreover, we have
$M_6^{Pl}(\Z/m\Z)=\rho(M_6^{Pl}(\Z/m\Z))$ for $m\neq 4,5$, where $\rho$ is obtained
such that $\rho(\Z/m\Z)=<diag(1,\xi_m^a,\xi_m^b)>$. Thus, the
corresponding loci $\widetilde{M_6^{Pl}(\Z/m\Z)}$, where $m\neq4,5$,
are ES-Irreducible provided that they are non-empty.

Now, we consider the remaining cases of the loci
$\widetilde{M_6^{Pl}(\Z/m\Z)}$ with $m=4$ or $5$:

Obviously, the plane model of type $5, (1,2)$ always have a bigger
automorphism group by permuting $X$ and $Z$. Therefore, there is at
most one ``normal form'' that defines curves of degree $5$ whose
full automorphism group is isomorphic to $\Z/5\Z$ (observe that the
number of
  the conjugacy classes of representations of $\Z/5\Z$ in $PGL_3(K)$ is
  three). In particular, $\widetilde{M_6^{Pl}(\Z/5\Z)}$ is
ES-Irreducible if it is non-empty. More precisely,
$\widetilde{M_6^{Pl}(\Z/5\Z)}=\rho(M_6^{Pl}(\Z/5\Z))$ {with}
$\rho(\Z/5\Z)=<diag(1,1,\xi_5)>$ in this case.

On the other hand, for the cyclic groups of order 4, we have: Type
$4, (1,3)$ is not irreducible, since it is of the form $X\cdot
G(X;Y;Z)$. Hence, it is singular, and will be out of the scope of
this note. Therefore, we have
$M_6^{Pl}(\Z/4\Z)=\rho_1(M_6^{Pl}(\Z/4\Z))\cup
  \rho_2(M_6^{Pl}(\Z/4\Z))$, where $\rho_1$ corresponds to Type
  $4, (0,1)$ and $\rho_2$ to Type $4, (1,2)$.
%
\subsection{On type $4,(0,1)$} Consider the non-singular plane curve defined
by the equation $$\tilde{C}:\,\,X^5+Y^5+Z^4X+\beta X^3Y^2{=0},$$
where $\beta\neq0.$ This curve admits an automorphism of order $4$
namely, $\sigma:=[X;Y;\xi_4Z]$ that fixes pointwise the line $Z=0$
(its axis) and the point $[0:0:1]$ off this line (its center). We
call the elements of $PGL_3(K)$ that fix similar geometric
constructions, homologies (for the element
$diag(1;\xi_m^a;\xi_m^b)\in PGL_3(K)$ with $0\leq a<b<m$, is an
homology when $a=0$). It follows, by {$\S 5$ in \cite{Mit},} that
$Aut(\tilde{C})$ should fix a point, a line or a triangle.

If $Aut(\tilde{C})$ fixes a triangle and neither a line nor a point
is leaved invariant then, $\tilde{C}$ is a descendant of the Fermat
curve $F_5$ or the Klein curve $K_5$ (Harui \cite{Harui}, $\S 5$).
But this is impossible, because $4\nmid|Aut(F_5)|(=150)$, and
$4\nmid|Aut(K_5)|(=39)$. Therefore, $Aut(\tilde{C})$ should fix a
line and a point off that line.

Now, the point {$[0:0:1]$} is an inner Galois point of $\tilde{C}$,
by Lemma $3.7$ in \cite{Harui}. Also, it is unique, by Yoshihara
\cite{Yoshihara}, $\S 2$,
Theorem $4$. 
Therefore, this point must be fixed by $Aut(\tilde{C})$. Moreover,
the axis $Z=0$ is also leaved invariant by Mitchell \cite{Mit}, \S
4. In particular, $Aut(\tilde{C})$ is cyclic by Lemma 11.44 in
\cite{Book}, and automorphisms of $\tilde{C}$ are all
diagonal of the form $[X;vY;tZ]$. This in turns implies that
$v^5=v^2=t^4=1$. Hence, $v=1$ and $t$ is a $4$-th root of unity.
This shows that $Aut(\tilde{C})$ is cyclic of order $4$.

Therefore, with the above argument we conclude the following result.
\begin{prop} The locus set $\widetilde{\rho_1(M_6^{Pl}(\Z/4\Z))}$ is
non-empty.
\end{prop}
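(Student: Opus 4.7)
The plan is to exhibit an explicit non-singular plane quintic whose full automorphism group is cyclic of order $4$, realised via the representation $\rho_1$ of type $4,(0,1)$. A natural candidate, drawn from the normal form for type $4,(0,1)$ in the table, is
$$\tilde{C}:\, X^5+Y^5+Z^4X+\beta X^3Y^2=0, \quad \beta\in K\setminus\{0\},$$
on which $\sigma=\mathrm{diag}(1,1,\xi_4)$ manifestly acts, so $\tilde{C}\in\rho_1(M_6^{Pl}(\Z/4\Z))$ whenever it is smooth. The first step is therefore to verify that for a generic (or explicitly constructible) value $\beta\neq 0$ the curve $\tilde{C}$ is non-singular; this amounts to a direct Jacobian computation, and any $\beta$ avoiding a finite bad locus suffices.

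The core of the argument is to show that $\mathrm{Aut}(\tilde{C})\cong\Z/4\Z$. Following the reasoning already sketched before the statement, I would first rule out that $\tilde{C}$ is a descendant of the Fermat curve $F_5$ or the Klein curve $K_5$ by the divisibility obstruction $4\nmid |Aut(F_5)|=150$ and $4\nmid |Aut(K_5)|=39$. By Theorem~\ref{teo1} (Harui), this forces $\mathrm{Aut}(\tilde{C})$ to fix a line and a point off that line. To identify them concretely, observe that $[0:0:1]$ is the centre of the homology $\sigma$ and, by \cite[Lemma 3.7]{Harui}, is an inner Galois point of $\tilde{C}$; by \cite[Theorem 4]{Yoshihara}, it is the unique such point. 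Consequently it must be preserved by every automorphism of $\tilde{C}$, and \cite[\S 4]{Mit} then forces the axis $Z=0$ to be preserved as well.

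With these invariants in place, Lemma 11.44 in \cite{Book} yields that $\mathrm{Aut}(\tilde{C})$ is cyclic and every element is diagonal of the form $[X:vY:tZ]$. Substituting this into the defining equation and comparing coefficients, invariance of $X^5$ and $Z^4X$ is automatic once $t^4=1$, the monomial $Y^5$ forces $v^5=1$, and $X^3Y^2$ forces $v^2=1$; combined these give $v=1$ and $t\in\mu_4(K)$. Hence $\mathrm{Aut}(\tilde{C})=\langle\mathrm{diag}(1,1,\xi_4)\rangle$, which is precisely $\rho_1(\Z/4\Z)$, so $\tilde{C}\in\widetilde{\rho_1(M_6^{Pl}(\Z/4\Z))}$.

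The only genuinely delicate issue will be the smoothness check for at least one admissible $\beta\neq 0$ and the verification that the hypotheses of the Harui/Yoshihara results on inner Galois points apply in this concrete degree-$5$ setting; the remaining reductions are orderly bookkeeping of the constraints imposed by the fixed line and fixed point. Producing such an explicit $\tilde{C}$ furnishes a point of $\widetilde{\rho_1(M_6^{Pl}(\Z/4\Z))}$ and proves the non-emptiness claimed.
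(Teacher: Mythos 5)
Your overall strategy is the same as the paper's (same curve $\tilde{C}:X^5+Y^5+Z^4X+\beta X^3Y^2=0$, same inner Galois point argument, same final diagonal computation giving $v^5=v^2=t^4=1$), but there is a genuine gap at the reduction step. After excluding the Fermat and Klein descendants by the divisibility obstruction, you assert that Theorem \ref{teo1} ``forces $Aut(\tilde{C})$ to fix a line and a point off that line.'' That deduction is not valid: Harui's theorem leaves open the case that $Aut(\tilde{C})$ fixes a point on the curve, and, more seriously, the case that it is conjugate to one of the finite primitive subgroups of $PGL_3(K)$, namely $PSL(2,7)$, $A_5$, $A_6$ or the Hessian groups $Hess_{36}$, $Hess_{72}$, $Hess_{216}$. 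Your divisibility trick does not dispose of these: $|PSL(2,7)|=168$, $|A_6|=360$ and the Hessian orders $36,72,216$ are all divisible by $4$ (and these groups contain elements of order $4$), so nothing in your argument rules them out. The paper closes exactly this hole before invoking the Galois point: since $\sigma=[X;Y;\xi_4 Z]$ is a \emph{homology} of period $4>3$, Mitchell's results (\cite{Mit}, \S 5) imply that $Aut(\tilde{C})$ must leave invariant a point, a line or a triangle, hence cannot be primitive; the triangle case is then the one killed by the Fermat/Klein descendant argument, and only afterwards does one know a fixed line and a fixed point off it exist.

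The gap is repairable within your own framework, because the Galois point argument you use only ``to identify the invariants concretely'' in fact does more: $[0:0:1]$ is an inner Galois point by \cite[Lemma 3.7]{Harui} (this uses that $\sigma$ is a homology of order $d-1=4$), it is unique by \cite[Theorem 4]{Yoshihara}, and uniqueness forces every automorphism to fix it — which already precludes a primitive group and also subsumes Harui's ``fixes a point on $C$'' case. But you must say this explicitly (or cite Mitchell as the paper does); as written, the chain ``not a Fermat/Klein descendant $\Rightarrow$ fixes a line and a point off it'' is false. The remaining points you flag (a Jacobian check that some $\beta\neq 0$ gives a smooth curve, and the applicability of Lemma 11.44 of \cite{Book} and of Mitchell \S 4 for the invariance of the axis $Z=0$) are handled the same way in the paper and are fine.
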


\subsection{On type $4, (1,2)$} Consider the non-singular plane curve
defined by the equation $$\tilde{\tilde{C}}:\,\,X^5+X(Z^4+Y^4)+\beta
Y^2Z^3{=0},$$ where $\beta\neq0$. This curve admits a cyclic
subgroup of automorphisms generated by $\tau:=[X;\xi_4Y;\xi_4^2Z]$.
For the same reason as above (i.e $4\nmid|Aut(K_5)|, |Aut(F_5)|$),
$\tilde{\tilde{C}}$ is not a descendant of the Fermat curve $F_5$ or
the Klein curve $K_5$. Moreover, $Aut(\tilde{\tilde{C}})$ is not
conjugate to an icosahedral group $A_5$ (no elements of order 4),
the Klein group $PSL(2,7)$, the Hessian group $Hess_{216}$ or the
alternating group $A_6$ (since by \cite{Harui}, Theorem $2.3$,
$|Aut(\tilde{\tilde{C}})|\leq150$).

Now, we claim to prove that $Aut(\tilde{\tilde{C}})$ is also not
conjugate to any of the Hessian subgroups namely, $Hess_{36}$ or
$Hess_{72}$, and therefore it should fix a line and a point off that
line:
Let $C$ be a non-singular plane curve of degree $5$ such that
$Aut(C)$ is conjugate, through $P\in PGL_3(K)$, to $Hess_{*}$ with
$*\in\{36,72,216\}$. Then $Aut(C_{P})$ is given by the usual
presentation inside $PGL_3(K)$ of the above Hessian groups. In
particular, $Aut(C_{P})$ always has the following five elements:
$[Z;Y;X]$, $[X;Z;Y]$, $[Y;X;Z]$, $[Y;Z;X]$ and $[X;\omega Y;\omega^2
Z]$, where $\omega$ is a primitive $3$-rd root of unity. Because
$C_{P}$ is invariant by $[Z;Y;X]$, $[X;Z;Y]$, $[Y;X;Z]$ and
$[Y;Z;X]$, then $C_{P}$ must be of the form:
$u(X^5+Y^5+Z^5)+a(X^4Z+X^4Y+Y^4X+Y^4Z+Z^4X+Z^4Y)+G(X;Y;Z)$, where
$u,a\in K$, and $G(X;Y;Z)$ is a homogenous polynomial of degree at
most three in each variable. Now, imposing {the condition}
$[X;\omega Y;\omega^2 Z]\in Aut(C_{P})$, we obtain that $u=0$ and
$a=0$, a contradiction to non-singularity. Therefore, there is no
non-singular, degree 5 plane curve whose automorphism group is
conjugate to one of the Hessian groups. This proves our claim.

It follows, by the previous discussion, that
$Aut(\tilde{\tilde{C}})$ should fix a line and a point off that
line. Moreover, $\tau\in Aut(\tilde{\tilde{C}})$ is of the form
$diag(1;a;b)$ such that $1,a,b$ (resp. $1,a^3,b^3$) are pairwise
distinct then, automorphisms of $\tilde{\tilde{C}}$ are of the forms
$\tau_1:=[X;vY+wZ;sY+tZ],\,\tau_2:=[vX+wZ;Y;sX+tZ]$ or
$\tau_3:=[vX+wY;sX+tY;Z]$ (because the fixed point is one of the
reference points $[1:0:0],\,[0:1:0]$ or $[0:0:1]$, and the fixed
line is one of the reference lines $X=0,\,Y=0$ or $Z=0$).
\par If $\tau_1\in Aut(\tilde{\tilde{C}})$ then $s=0=w$ (Coefficient of $Y^5$ and $Z^5$), and we have the same conclusion, if $\tau_2$ (resp. $\tau_3$) $\in Aut(\tilde{\tilde{C}})$ from the coefficients of $X^3Y^2$ and $Y^4Z$ (resp. $Z^3X$ and $YZ^4$). Hence, automorphisms of $\tilde{\tilde{C}}$ are all diagonal of the form $[X;vY;sZ]$. Moreover, $v^4=s^4=v^2s^3=1$, hence $v=\xi_4^r,\,s=\xi_4^{r'}$ with $(r,r')\in\{(0,0),\,(2,0),\,(1,2),\,(3,2)\}$. That is, $Aut(\tilde{\tilde{C}})$ is cyclic of
order 4.

\noindent Consequently, the following results follow.
\begin{prop} The locus set $\widetilde{\rho_2(M_6^{Pl}(\Z/4\Z))}$ is
non-empty.
\end{prop}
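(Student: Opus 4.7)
My plan is to exhibit an explicit point of $\widetilde{\rho_2(M_6^{Pl}(\Z/4\Z))}$, namely the curve $\tilde{\tilde{C}}: X^5 + X(Y^4+Z^4) + \beta Y^2 Z^3 = 0$ for a suitable $\beta \neq 0$. The proof naturally breaks into three tasks: (i) non-singularity for some admissible $\beta$; (ii) the presence of an order-$4$ automorphism of type $4,(1,2)$; and (iii) the upper bound $Aut(\tilde{\tilde{C}}) = \langle \tau \rangle$.

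For (i), I would write out the partial derivatives $F_X, F_Y, F_Z$ and argue that the system $F = F_X = F_Y = F_Z = 0$ has no solution in $\mathbb{P}^2(K)$ for $\beta$ outside a proper Zariski-closed subset of $\mathbb{A}^1\setminus\{0\}$; since this bad locus is cut out by the discriminant of the pencil, it is a proper closed subset and admissible $\beta$ exist. For (ii), the diagonal matrix $\tau = \mathrm{diag}(1,\xi_4,\xi_4^2)$ fixes each of $X^5$, $XY^4$, $XZ^4$, and scales $Y^2Z^3$ by $\xi_4^{2+6} = \xi_4^{8} = 1$, so it preserves the equation; by construction $\langle\tau\rangle$ lies in the conjugacy class $[\rho_2]$ corresponding to type $4,(1,2)$.

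For (iii), I would invoke the analysis carried out immediately before the proposition in the main text. The primitive groups $A_5, A_6, PSL(2,7), Hess_{216}$ are ruled out by Harui's bound $|Aut(\tilde{\tilde{C}})|\leq 150$ together with element-order constraints, and descendants of the Fermat or Klein quintics are excluded since $4\nmid 150$ and $4\nmid 39$. The Hessian subgroups $Hess_{36}$ and $Hess_{72}$ are killed by the explicit invariance computation: demanding simultaneous invariance under $[Z;Y;X],[X;Z;Y],[Y;X;Z],[Y;Z;X]$ and $[X;\omega Y;\omega^2 Z]$ forces the coefficients of both $X^5+Y^5+Z^5$ and $X^4Y+\cdots+Z^4Y$ to vanish, contradicting non-singularity. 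Harui's theorem then compels $Aut(\tilde{\tilde{C}})$ to fix a point off a line; a case analysis on which reference flag is preserved (exploiting the coefficients of $Y^5, Z^5, X^3Y^2, Y^4Z, XZ^4, YZ^4$) forces every automorphism to be diagonal of the form $[X; vY; sZ]$ with $v^4 = s^4 = v^2 s^3 = 1$, whose four solutions are precisely the powers of $\tau$.

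The main obstacle is the Hessian elimination, since order considerations alone cannot separate $|Hess_{36}| = 36$ or $|Hess_{72}| = 72$ from being a candidate automorphism group; the decisive input is the invariance computation sketched above, which ruled these out for every non-singular plane quintic. Once (iii) is established, the proposition is immediate: any admissible $\beta\neq 0$ produces a curve $\tilde{\tilde{C}}$ with $Aut(\tilde{\tilde{C}}) = \langle\tau\rangle \cong \Z/4\Z$ in the class $[\rho_2]$, so $[\tilde{\tilde{C}}]\in\widetilde{\rho_2(M_6^{Pl}(\Z/4\Z))}$ and the locus is non-empty.
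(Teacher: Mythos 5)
Your proposal is correct and takes essentially the same route as the paper: the same family $X^5+X(Y^4+Z^4)+\beta Y^2Z^3=0$ with $\tau=\mathrm{diag}(1,\xi_4,\xi_4^2)$, elimination of the primitive groups and of Fermat/Klein descendance via Harui's bound and order-divisibility ($4\nmid 150$, $4\nmid 39$), the same invariance computation ruling out $Hess_{36}$ and $Hess_{72}$ for smooth plane quintics, and the same coefficient analysis forcing every automorphism to be diagonal with $v^4=s^4=v^2s^3=1$, i.e. $Aut(\tilde{\tilde{C}})=\langle\tau\rangle\cong\Z/4\Z$. Your handling of non-singularity (admissible $\beta$ outside a proper closed subset) is if anything slightly more careful than the paper, which simply asserts smoothness for $\beta\neq 0$.
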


\begin{cor} The locus set $\widetilde{M_6^{Pl}(\Z/m\Z)}$ is
ES-Irreducible if and only if $m\neq4$. If $m=4$ then
$\widetilde{M_6^{Pl}(\Z/m\Z)}$ has exactly two irreducible equation
components, and hence the number of its irreducible components is at
least two.
\end{cor}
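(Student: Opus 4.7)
The strategy is to assemble the results of Section 2 and the earlier parts of Section 4 in a case-by-case fashion over the values of $m$ listed in the normal-form table.

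For the ES-Irreducibility when $m\neq 4$: for every admissible $m\in\{2,3,8,10,13,15,16,20\}$ the discussion immediately after the table shows that $M_6^{Pl}(\Z/m\Z)=\rho(M_6^{Pl}(\Z/m\Z))$ for the unique class $[\rho]\in A_{\Z/m\Z}$ coming from the corresponding row, so whenever the locus is non-empty the ES-Irreducibility is immediate from the definition. For $m=5$, the table produces two potential classes, namely Types $5,(0,1)$ and $5,(1,2)$; but Type $5,(1,2)$ has normal form $X^5+Y^5+Z^5+\beta_{3,1}X^2YZ^2+\beta_{4,3}XY^3Z$, which always admits the extra involution swapping $X$ and $Z$. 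Hence $\widetilde{\rho(M_6^{Pl}(\Z/5\Z))}$ is empty for that representation, and only the single class with $\rho(\Z/5\Z)=\langle\mathrm{diag}(1,1,\xi_5)\rangle$ contributes, yielding ES-Irreducibility again.

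For the failure of ES-Irreducibility at $m=4$: the three classes in $A_{\Z/4\Z}$ come from Types $4,(0,1)$, $4,(1,2)$ and $4,(1,3)$. Type $4,(1,3)$ is excluded because the listed normal form factors through $X$, and so cannot define a non-singular plane quintic. The two remaining classes $[\rho_1],[\rho_2]$ are non-conjugate in $PGL_3(K)$ since their generators have different unordered eigenvalue triples up to scalar, and by the two propositions proved in \S 4.1--\S 4.2 each of $\widetilde{\rho_1(M_6^{Pl}(\Z/4\Z))}$ and $\widetilde{\rho_2(M_6^{Pl}(\Z/4\Z))}$ is non-empty. By the first lemma of Section 2, these two strata are disjoint, so $\widetilde{M_6^{Pl}(\Z/4\Z)}$ has exactly two equation components. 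The remark after the definition of ES-Irreducibility then gives the lower bound of two for the number of irreducible components.

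The corollary is essentially a consolidation: the real technical work has already been carried out in the two preceding propositions, where one must show that generic members of the two normal forms have full automorphism group exactly $\Z/4\Z$ and not a larger group conjugate to a primitive subgroup, a Hessian subgroup, or a cyclic extension. The only point in the present plan that requires a brief verification is the exclusion of Type $5,(1,2)$ (a direct symmetry check on the normal form) and the non-conjugacy of $\rho_1$ and $\rho_2$ (an eigenvalue comparison); no further obstacle is anticipated.
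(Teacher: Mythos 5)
Your proposal is correct and follows essentially the same route as the paper: the table reduces everything to a single class for $m\neq 4,5$, the $X\leftrightarrow Z$ symmetry of Type $5,(1,2)$ and the reducibility of Type $4,(1,3)$ handle the remaining exclusions, and the two non-emptiness propositions of \S 4.1--\S 4.2 together with the disjointness lemma of \S 2 give exactly two equation components for $m=4$ and the lower bound on irreducible components.
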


\begin{rem}\label{rem2}
Observe that for any element of $\rho_1(M_6^{Pl}(\Z/4\Z))$, the
Galois cover of degree 4 corresponding to
$${C}_{{\rho_1}}:=Z^4L_{1,Z}+L_{5,Z}=0\rightarrow {C}_{\rho_1}/<[X;Y;\xi_4Z]>$$
is ramified exactly at six points with ramification index 4. Indeed,
the fixed points of $\sigma^i$ for $i=1,2,3,4$ in $\mathbb{P}^2(K)$
are all the same set where $\sigma=diag(1,1,\xi_4)${. T}herefore, we
only need to consider the ramification points of $\sigma${. I}n
particular, the ramification index is always 4. Now, by the Hurwitz
formula, we {have} $10=4(2 g_0-2)+3k$ where $g_0$ is the genus of
${C}_{\rho_1}/<[X,Y,\xi_4 Z]>${. Hence, $g_0=0$ and $k=6$}. On the
other hand, for any element of $\rho_2(M_6^{Pl}(\Z/4\Z))$, the
Galois cover
$${{C}}_{{\rho_2}}:=X^5+X(Z^4+
Y^4)+\beta_{2,0}X^3Z^2+\beta_{3,2}X^2Y^2Z+\beta_{5,2} Y^2
Z^3=0\rightarrow{C}_{\rho_2}/<[X;\xi_4 Y;\xi_4^2 Z]$$ is ramified at
the points $[0:1:0],[0:0:1]$ with ramification index 4, and at the 4
points namely, $[1:0:h]$ where $1+h^4+\beta_{2,0}h^2=0$ with
ramification index 2 provided that $\beta_{2,0}\neq \pm 2$. {We
exclude the situation $\beta_{2,0}=\pm 2$ so that the defining
equation is non-singular
and geometrically irreducible}.
\end{rem}

\begin{rem} Given $G$, a non-trivial finite group, such that $\widetilde{M_6^{Pl}(G)}$ is
non-empty. By a tedious work, one can show that
$\widetilde{M_6^{Pl}(G)}$ is ES-Irreducible, except for the case
$G\cong\Z/4\Z$ (for more details, we refer to \cite{BaBaaut}).
\end{rem}

\begin{thm} Let $d\geq 5$ be an odd integer, and consider $g=(d-1)(d-2)/2$ as usual. Then $\widetilde{M_g^{Pl}(\Z/(d-1)\Z)}$ is not
ES-Irreducible, and it has at least two irreducible components.
\end{thm}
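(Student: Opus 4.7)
I generalize the $d=5$ argument (worked out for $\Z/4\Z$ in $\S 4$) to an arbitrary odd $d\geq 5$ by exhibiting two non-conjugate injective representations of $\Z/(d-1)\Z$ in $PGL_3(K)$ and, for each, a non-singular plane model of degree $d$ whose full automorphism group is exactly $\Z/(d-1)\Z$; the conclusion then follows immediately from Lemma 2.1. Concretely I take
\[
\rho_1(\Z/(d-1)\Z)=\langle\mathrm{diag}(1,1,\xi_{d-1})\rangle,\qquad \rho_2(\Z/(d-1)\Z)=\langle\mathrm{diag}(1,\xi_{d-1},\xi_{d-1}^{2})\rangle,
\]
i.e.\ the types $(d-1),(0,1)$ and $(d-1),(1,2)$. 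For $d-1\geq 4$ the generator of $\rho_2$ has three pairwise distinct eigenvalues and is not a homology, while every non-trivial element of $\rho_1$ is a homology; in fact $\rho_1(\Z/(d-1)\Z)$ contains $d-2$ homologies whereas $\rho_2(\Z/(d-1)\Z)$ contains exactly one (the element $\mathrm{diag}(1,-1,1)=\rho_2(\sigma^{(d-1)/2})$, which makes sense because $d$ is odd). Since conjugation in $PGL_3(K)$ preserves the number of homologies of a cyclic subgroup, $[\rho_1]\neq[\rho_2]$ in $A_{\Z/(d-1)\Z}$.

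For explicit models I take, in direct analogy with the $d=5$ curves of $\S 4$,
\[
C_1\colon\ X^d+Y^d+Z^{d-1}X+\beta\,X^{d-2}Y^2=0,\qquad C_2\colon\ X^d+X(Y^{d-1}+Z^{d-1})+\beta\,Y^2Z^{d-2}=0,
\]
both with $\beta\neq 0$. Each is manifestly invariant under the corresponding $\rho_i$; a Jacobian check on the three coordinate lines, together with a discriminant condition on $\beta$ (generalising $\beta\neq\pm 2$ from $\S 4$), exhibits a non-empty open set of parameters for which the model is non-singular.

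The heart of the proof is to show $\mathrm{Aut}(C_i)=\rho_i(\Z/(d-1)\Z)$, via Theorem \ref{teo1}. The cores of $C_1$ and $C_2$ are $X^d+Y^d$ and $X^d$ respectively, neither of which is a defining polynomial of $F_d$ or $K_d$, so cases (3) and (4) of Harui are excluded outright. For case (5), the primitive subgroups $PSL(2,7), A_5, A_6, Hess_{36}, Hess_{72}, Hess_{216}$ contain no cyclic subgroup of order $d-1$ once $d\geq 8$; the small cases $d=5,7$ are handled by the Hessian-exclusion argument of the $d=5$ proof, which imposes the standard generators of each primitive group on the defining equation and deduces singularity. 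Hence $\mathrm{Aut}(C_i)$ lies in Harui's case (1) or (2); in the latter, the exact sequence $1\to N\to\mathrm{Aut}(C_i)\to G'\to 1$ has $|N|\mid d$, and since $\gcd(d,d-1)=1$ the subgroup $\rho_i(\Z/(d-1)\Z)$ injects into $G'\leq PGL_2(K)$. Klein's classification together with $(d-1)\nmid|A_4|,|S_4|,|A_5|$ for $d\geq 7$ then forces $G'$ to be cyclic of order $d-1$, and a dihedral extension is excluded because the induced involution cannot preserve the explicit equation of $C_i$.

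The main obstacle is this last, explicit verification. For $C_1$, I would show that $[0:0:1]$ is the unique inner Galois point (existence by \cite[Lemma 3.7]{Harui}, uniqueness by the analogue of \cite[\S 2, Theorem 4]{Yoshihara}), so $\mathrm{Aut}(C_1)$ stabilises both $[0:0:1]$ and the axis $\{Z=0\}$, hence is cyclic by \cite[Lemma 11.44]{Book}; enumerating diagonal automorphisms preserving the four monomials of $C_1$ then yields exactly the $d-1$ elements of $\rho_1(\Z/(d-1)\Z)$. For $C_2$, the three distinct eigenvalues of the generator of $\rho_2$ force every element of $\mathrm{Aut}(C_2)$ normalising $\rho_2(\Z/(d-1)\Z)$ either to permute the three eigenspaces or to be diagonal in the reference basis; a permutation is ruled out by comparing the coefficients of $X^d,Y^d,Z^d$ in $C_2$ (only $X^d$ appears), and a monomial-by-monomial check on $X^d,XY^{d-1},XZ^{d-1},Y^2Z^{d-2}$ pins down the diagonal stabiliser as exactly $\rho_2(\Z/(d-1)\Z)$. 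Having shown both $\rho_i(\widetilde{M_g^{Pl}(\Z/(d-1)\Z)})$ non-empty, Lemma 2.1 gives at least two equation-irreducible components of $\widetilde{M_g^{Pl}(\Z/(d-1)\Z)}$; hence the locus is not ES-Irreducible and has at least two irreducible components in $M_g$.
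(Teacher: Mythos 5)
Your overall strategy is the same as the paper's: the two representations of types $(d-1),(0,1)$ and $(d-1),(1,2)$, the same two explicit curves $C_1$ and $C_2$, the Galois-point argument for $C_1$, and a reduction to diagonal automorphisms for $C_2$. However, there is a genuine gap in the way you exclude Harui's cases (3) and (4). You argue that the cores of $C_1$ and $C_2$ (namely $X^d+Y^d$ and $X^d$) are not defining polynomials of $F_d$ or $K_d$, ``so cases (3) and (4) are excluded outright.'' This does not work: being a descendant of $F_d$ or $K_d$ is a property up to a suitable change of coordinates (equivalently, $\mathrm{Aut}(C)$ being \emph{conjugate} to a subgroup of $\mathrm{Aut}(F_d)$ or $\mathrm{Aut}(K_d)$), and the core of a defining equation is not a $PGL_3(K)$-invariant -- a different choice of coordinates can change it completely. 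This is exactly why, in the degree $5$ case, the paper rules out the Fermat/Klein descendants by divisibility ($4\nmid |\mathrm{Aut}(F_5)|=150$, $4\nmid |\mathrm{Aut}(K_5)|=39$) rather than by looking at the core, and why in \S 5 the type $3,(1,2)$ sextic, whose core literally \emph{is} the Klein sextic, still requires the whole $\Gamma_1$-analysis to decide whether some transform of it is a Fermat descendant. The correct replacement (the paper's route) is: if $\mathrm{Aut}(C_i)$ were conjugate into $\mathrm{Aut}(K_d)$ then $d-1\mid 3(d^2-3d+3)$, impossible since $d^2-3d+3\equiv 1\pmod{d-1}$; if into $\mathrm{Aut}(F_d)$ then $d-1\mid 6d^2$, i.e. $d-1\mid 6$, which only happens for $d=7$, and that residual case must be killed separately by checking that $\mathrm{Aut}(F_7)$ has no element of order $6$. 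Your proposal leaves this $d=7$ Fermat case untreated, since your ``small cases $d=5,7$'' remark only covers the primitive (Hessian) groups of case (5).

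Two smaller points in the $C_2$ endgame also need tightening. You restrict attention to elements of $\mathrm{Aut}(C_2)$ \emph{normalising} $\rho_2(\Z/(d-1)\Z)$, but nothing yet guarantees that the full automorphism group normalises this subgroup; the paper instead first shows (after the exclusions above, via Mitchell/Harui) that $\mathrm{Aut}(C_2)$ fixes a point and a line off it, observes that this point and line must be among the reference points/lines because they are invariant under the diagonal generator with three distinct eigenvalues, and only then reduces all automorphisms to the three triangular shapes and finally to diagonal ones by comparing coefficients. Likewise, your dismissal of a dihedral $G'$ (``the induced involution cannot preserve the explicit equation'') is asserted rather than verified; in the paper this possibility never needs a separate argument because the triangular-to-diagonal coefficient comparison already bounds the whole group by the $d-1$ diagonal elements. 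With the descendant exclusion repaired by the divisibility argument (plus the $d=7$ check) and the $C_2$ reduction rerouted through the fixed point/line as above, your proof becomes essentially the paper's.
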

\begin{proof}
The above argument for concrete curves of Type $4, (0,1)$ and Type
$4, (1,2)$ is valid for any odd degree $d\geq5$, and the proof is
quite similar. In other words, let $\tilde{C}$ and
$\tilde{\tilde{C}}$ be the non-singular plane curves of types $d-1,
(0,1)$ and $d-1, (1,2)$ defined by the equations
$X^d+Y^d+Z^{d-1}X+\beta X^{d-2}Y^2=0$, and
$X^d+X(Z^{d-1}+Y^{d-1})+\beta Y^2Z^{d-2}=0$ {respectively}, where
$\beta\neq0$. Then, $Aut(\tilde{C})$ and $Aut(\tilde{\tilde{C}})$
are non-conjugate cyclic groups of order $d-1$, and are generated by
$[X;Y;\xi_{d-1}Z]$ and $[X;\xi_{d-1}Y;\xi_{d-1}^2Z]$ respectively.
Therefore, they belong to two different
$[\rho]'s$.\\\\
\noindent\textbf{On type $d-1, (0,1)$:} With a homology of order
$d-1\geq 4$ inside $Aut(\tilde{C})$, we conclude that
$Aut(\tilde{C})$ fixes a point, a line or a triangle (See
\cite{Mit}, \S 5). Furthermore, the center {$[0:0:1]$} of this
homology is an inner Galois point, by Lemma $3.7$ in \cite{Harui}.
Also, it is unique, by Theorem $4$ in \cite{Yoshihara}. Therefore,
it should be fixed by $Aut(\tilde{C})$, and also the axis $Z=0$ is
leaved invariant, by Theorem 4 in \cite{Mit}. Hence,
$Aut(\tilde{C})$ is cyclic, by Lemma $11.44$ in \cite{Book}, and
automorphisms of $\tilde{C}$ are of the form $diag(1;v;t)$ such that
$v^d=t^{d-1}=v=1$. That is, $|Aut(\tilde{C})|=d-1$.
\\\\
\noindent\textbf{On type $d-1, (1,2)$:} First, we prove that
$Aut(\tilde{\tilde{C}})$ fixes a line and a point off this line:
We consider the case $d\geq7$ (For $d=5$, we refer to the previous
results). The alternating group $A_6$ has no elements of order
$d-1\geq6$. The Klein group $PSL(2,7)$, which is the only simple
group of order $168$, has no elements of order $\geq8$, and also
there are no elements of order $6$ inside (for more details, we
refer to \cite{Fano}). Therefore, the primitive groups $A_5, A_6$,
and $PSL(2,7)$ do not appear as the full automorphism group.
Moreover, elements inside the Hessian group $Hess_{216}\cong
SmallGroup(216,153)$ have orders $1,2,3,4$ and $6$. Then $Hess_{*}$
with $*\in\{36,72,216\}$ do not appear as the full automorphism
group, except possibly for $d=7$. On the other hand, $d-1\nmid
3(d^2-3d+3)$ hence $\tilde{\tilde{C}}$ is not a descendant of the
Klein curve $K_d$. Furthermore, $\tilde{\tilde{C}}$ is not a
descendant of the Fermat curve $F_d$, because $d-1\nmid 6d^2$
(except for $d=7$).

Finally, it remains to deal with the case $d=7$ for the Hessian
groups or for being a Fermat's descendant. By the same line of
argument as for the claim of Type $4, (1,2)$, we can show that non
of the Hessian groups could appear for a non-singular, degree 7,
plane curve. Also, the automorphisms of the Fermat curve $F_7$ are
of the forms
$[X;\xi_{7}^aY;\xi_{7}^bZ],\,[\xi_{7}^bZ;\xi_{7}^aY;X],\,[X;\xi_{7}^bZ;\xi_{7}^aY],\,[\xi_{7}^aY;X;\xi_{7}^bZ],\,[\xi_{7}^aY;\xi_{7}^bZ;X],\,[\xi_{7}^bZ;X;\xi_{7}^aY].$
One can easily verify that non of them has order $6$. Consequently,
we exclude the possibility of being a Fermat's descendant.
\par Now, the full automorphism group should fix a line and a point off this line. Thus automorphisms of $\tilde{\tilde{C}}$ have the forms $[X;vY+wZ;sY+tZ],\,[vX+wZ;Y;sX+tZ]$ or $[vX+wY;sX+tY;Z]$, since $[X;\xi_{d-1}Y;\xi_{d-1}^2Z]\in Aut(\tilde{\tilde{C}})$.

If $[X;vY+wZ;sY+tZ]\in Aut(\tilde{\tilde{C}})$ then $s=0=w$
(Coefficient of $Y^d$ and $Z^d$), and the same conclusion follows if
$[vX+wZ;Y;sX+tZ]$ (resp. $[vX+wY;sX+tY;Z]$) $\in
Aut(\tilde{\tilde{C}})$ from the coefficients of $X^{d-2}Y^2$ and
$Y^{d-1}Z$ (resp. $Z^{d-2}X^2$ and $YZ^{d-1}$). Hence, automorphisms
of $\tilde{\tilde{C}}$ are all diagonal of the form $diag(1;v;s).$
Moreover, $v^{d-1}=s^{d-1}=v^2s^{d-2}=1$ that is, $v=\xi_{d-1}^r$
and $s=\xi_{d-1}^{r'}$ such that $d-1|2r-r'$. Therefore,
automorphisms of $\tilde{\tilde{C}}$ are
$[X;\xi_{d-1}^rY;\xi_{d-1}^{2r}Z]$ with $r\in\{0,1,...,d-2\}$. Hence,
$Aut(\tilde{\tilde{C}})$ is cyclic of order $d-1$, which was to be
shown.

\end{proof}

\section{On the locus $\widetilde{M_{10}^{Pl}(\Z/3\Z)}$.}
By a similar argument as the 
degree 5 case, we obtain the following ``normal forms'' for
$\rho(M_{10}^{Pl}(\Z/3\Z))$, (see the full table {for} degree 6 in
\cite{BaBacyc}):

\begin{center}

\begin{tabular}{|c|c|}
  \hline
  Type: $m, (a,\,b)$ & $F_{m,(a,b)}(X;Y;Z)$ \\\hline\hline
$3,(0,1)$& $Z^6+Z^3 L_{3,Z}+L_{6,Z}$\\ \hline $3,(1,2)$ &
$X^5Y+Y^5Z+Z^5X+\mu_1 Z^2X^4+\mu_2 X^2 Y^4+\mu_3 Y^2Z^4+\alpha_1
X^3Y^2Z+\alpha_2 XY^3Z^2+\alpha_3 X^2YZ^3$\\ \hline
\end{tabular}

\end{center}
where $\mu_i,\alpha_i$ {are} parameters {that take} values in $K$
{so that the associated models of the respective loci
$\rho(M_{10}^{Pl}(\Z/3\Z))$ are non-singular}.

\subsection{On type $3,(1,2)$}

\begin{prop}\label{nonprimitive}
Let $\delta\in M_{10}^{Pl}(\Z/3\Z)$ such that $\delta$ admits a
non-singular plane model $\tilde{C}$ of the form
$$X^5Y+Y^5Z+Z^5X+\mu_1 Z^2X^4+\mu_2 X^2Y^4+\mu_3 Y^2Z^4+\alpha_1 X^3Y^2Z+\alpha_2 XY^3Z^2+\alpha_3 X^2YZ^3=0.$$
Then, $Aut(\tilde{C})$ either fixes a line and a point off that line
or it
fixes a triangle. 
\end{prop}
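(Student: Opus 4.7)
The plan is to apply Harui's classification (Theorem~\ref{teo1}) to $\underline{G}:=Aut(\tilde{C})$, and to verify that each of the first four alternatives lands in one of the two desired conclusions, while the fifth (primitive) alternative cannot occur.

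In cases (1) and (2) of Theorem~\ref{teo1}, the group $\underline{G}$ fixes a point $Q\in\mathbb{P}^2(K)$. In case (1), $\underline{G}$ is cyclic, hence (in characteristic $0$) conjugate to a group of diagonal matrices, which stabilises every coordinate point and every coordinate line; one may then extract a fixed flag (point and line with point off the line). In case (2), the short exact sequence in Theorem~\ref{teo1} comes with a stabilised line $L$ off the fixed point $Q$. Either way, $\underline{G}$ fixes a line and a point off that line. In cases (3) and (4), $\underline{G}$ is (up to conjugation in $PGL_3(K)$) contained in $Aut(F_6)$, resp.\ $Aut(K_6)$; since both of these groups are generated by diagonal matrices and permutations of $X,Y,Z$, they preserve the coordinate triangle $\{XYZ=0\}$. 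Hence $\underline{G}$ fixes a triangle in cases (3) and (4).

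It remains to rule out case (5), namely that $\underline{G}$ is conjugate to one of $PSL(2,7)$, $A_5$, $A_6$, $Hess_{216}$, $Hess_{72}$ or $Hess_{36}$. For the three Hessian possibilities I would reproduce the template already used in \S 4 for the type $4,(1,2)$ argument: by Remark~\ref{rem1} the embedding of $Hess_{*}$ in $PGL_3(K)$ is unique up to conjugation, so after a suitable $P\in PGL_3(K)$, the curve $\tilde{C}_P$ would have to be invariant under the standard generators $S,T,U,V$ of \S 3. Imposing invariance under the coordinate permutations $[Z;Y;X],[X;Z;Y],[Y;X;Z],[Y;Z;X]$ together with the diagonal $[X;\omega Y;\omega^2 Z]$ on a general sextic, and matching coefficients monomial by monomial, forces the coefficients of every monomial of exponent $5$, in particular $X^5Y, Y^5Z, Z^5X$, to vanish, in contradiction with the prescribed form of $\tilde{C}$.

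The remaining primitive groups $A_5$, $A_6$ and $PSL(2,7)$ are handled via the classical description of their $3$-dimensional projective representations and of the corresponding spaces of invariant sextics: the Wiman/Valentiner sextic for $A_6$, the Mitchell invariants for $A_5$ (cf.\ \cite{Mit}), and the Klein--Fricke sextic invariants for $PSL(2,7)$. In each case, the locus of $G$-invariant smooth plane sextics, modulo $PGL_3(K)$-equivalence, is at most a low-dimensional family, and one checks that no member of it is projectively equivalent to a curve of type $3,(1,2)$ of the prescribed shape. The main obstacle in the whole argument is precisely this last step: unlike the Hessian case, which reduces to a short and uniform coordinate computation, excluding $A_5$, $A_6$ and $PSL(2,7)$ relies on the explicit invariant theory of each primitive group acting on $\mathbb{P}^2(K)$ together with an orbit comparison under $PGL_3(K)$.
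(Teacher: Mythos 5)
Your reduction step is fine and essentially matches the paper's: whether you invoke Harui's theorem or Mitchell's trichotomy, everything comes down to excluding the primitive groups $A_5$, $A_6$, $PSL(2,7)$ and $Hess_{36},Hess_{72},Hess_{216}$ for a curve of the given type $3,(1,2)$ form. The problem is that your exclusion of the primitive groups -- which is the entire content of the proposition -- does not go through as written. For the Hessian case, the degree-$5$ template you want to copy does not transfer to degree $6$. Imposing invariance under the four coordinate permutations and $[X;\omega Y;\omega^2 Z]$ on a sextic only kills the monomials of exponent $5$; the exponent-$6$ monomials $X^6+Y^6+Z^6$ (and many lower ones) survive, so there is no contradiction with non-singularity, and indeed smooth sextics admitting Hessian automorphism groups do exist -- this is precisely why the companion Proposition for type $3,(0,1)$ must allow $Hess_{216}$ as a possible outcome. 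Moreover, your claimed contradiction ``with the prescribed form of $\tilde{C}$'' conflates $\tilde{C}$ with $\tilde{C}_P$: the invariance conditions constrain the transformed equation $\tilde{C}_P$, not $\tilde{C}$, so the vanishing of $X^5Y,Y^5Z,Z^5X$ in $\tilde{C}_P$ says nothing about their presence in $\tilde{C}$. The paper closes this gap by using that $\sigma=[X;\omega Y;\omega^2 Z]$ is not a homology: one may normalize the conjugation so that $P^{-1}SP=\lambda S$, which forces $P$ to be one of three explicit monomial shapes $[Y;\gamma Z;\beta X]$, $[Z;\gamma X;\beta Y]$, $[X;\gamma Y;\beta Z]$, and a direct check shows none of these makes $\tilde{C}_P$ invariant under the transpositions $[X;Z;Y],[Y;X;Z],[Z;Y;X]$ contained in $Hess_*$.

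For $A_5$, $A_6$ and $PSL(2,7)$ you defer to the invariant theory of the Valentiner and Klein groups and an ``orbit comparison'' under $PGL_3(K)$ that you do not carry out; as you yourself note, this is the main obstacle, so as it stands this is a gap, not a proof (and the comparison is genuinely delicate, since the Valentiner sextic does contain order-$3$ automorphisms conjugate to $diag(1,\omega,\omega^2)$). The paper avoids all of this with an elementary group-theoretic observation: in each of $A_5$, $A_6$, $PSL(2,7)$ every element of order $3$ is inverted by an involution of the group (for $PSL(2,7)$ use its octahedral subgroup and the fact that all order-$3$ elements are conjugate, cf.\ \cite{Fano}), so if $Aut(\tilde{C})$ were conjugate to one of them there would exist $\tau\in Aut(\tilde{C})$ of order $2$ with $\tau\sigma\tau=\sigma^{-1}$. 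Such a $\tau$ must be of one of the forms $[X;\beta Z;\beta^{-1}Y]$, $[\beta Y;\beta^{-1}X;Z]$, $[\beta Z;Y;\beta^{-1}X]$, and none of these preserves the given equation. I recommend you replace your treatment of case (5) by this normalizer-of-$\langle\sigma\rangle$ argument (for the simple groups) together with the $P^{-1}SP=\lambda S$ computation (for the Hessian groups); the rest of your case analysis can stay.
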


\begin{proof}
It suffices to show that $Aut(\tilde{C})$ is not conjugate to any of
the finite primitive groups inside $PGL_3(K)$ namely, the Klein
group $PSL(2,7)$, the icosahedral group $A_5$, the alternating group
$A_6$, the Hessian group $Hess_{216}$ or to any of its subgroups
$Hess_{72}$ or $Hess_{36}$, and the result follows by Mitchell \cite{Mit}.

Let $\tau\in Aut(\tilde{C})$ be an element of order $2$ such that
$\tau\sigma\tau=\sigma^{-1}$, where $\sigma:=[X;\omega Y;\omega^2Z]$
then $\tau$ has one of the forms $[X;\beta Z;\beta^{-1}Y],\,[\beta
Y;\beta^{-1}X;Z]$ or $[\beta Z;Y;\beta^{-1}X]$. But non of these
transformations retains $\tilde{C}$, hence $Aut(\tilde{C})$ does not
contain an $S_3$ as a subgroup. Consequently, $Aut(\tilde{C})$ is
not conjugate to $A_5$ or $A_6$. Moreover, it is well known that
$PSL(2,7)$ contains an octahedral group of order $24$ (but not an
isocahedral group of order $60$), and since all elements of order 3
in $PSL(2,7)$ are conjugate (for more details, we refer to
\cite{Fano}). Then, by the same argument as before, we conclude that
$Aut(\tilde{C})$ is not conjugate to $PSL(2,7)$. Lastly, assume that
$Aut(\tilde{C})$ is conjugate, through a transformation $P$, to one
of the Hessian groups say, $Hess_{*}$. Then, we can consider
$P^{-1}SP=\lambda S$, because we did not fix the plane model for a
curve whose automorphism group is $Hess_{*}$. In particular, $P$
should be of the form $[Y;\gamma Z;\beta X],\,[Z;\gamma X;\beta Y]$
or $[X;\gamma Y;\beta Z]$, but non of them transform $\tilde{C}$ to
$\tilde{C}_P$ with $\{[X;Z;Y],\,[Y;X;Z],\,[Z;Y;X]\}\subseteq
Aut(\tilde{C}_P)$. Therefore, $Aut(\tilde{C})$ is not conjugate to
any of the Hessian groups, and we have done.
\end{proof}


{{\bf{Notations.}}\,\,Let $\Gamma:=\{(\beta'_1,\beta'_2,\beta'_3)\in
K^{*}\times K^{*}\times K^{*}:\,\,{
\Upsilon_1=1,\,\Upsilon_2=\Upsilon_3=\omega^2\Upsilon_4}\},$ where
\begin{eqnarray*}
{\Upsilon_1\left(\beta_1^{'},\beta_2^{'},\beta_3^{'}\right)}&:=&\beta_3^{'}\beta_2^{'5}+\left(\beta_1^{'}\beta_3^{'3}+1\right)\beta_2^{'}+\beta_3^{'5},\\
{\Upsilon_2\left(\beta_1^{'},\beta_2^{'},\beta_3^{'}\right)}&:=&
\lambda ^{'2}\left(\left(5 \lambda ^{'3}+1\right) \beta_3^{'} \beta
_2^{'5}+\left(5 \lambda ^{'6}+\lambda^{'3}+\left(2 \lambda
^{'6}+\lambda ^{'3}+3\right)
   \beta _1^{'} \beta _3^{'3}\right) \beta_2^{'}+\left(\lambda ^{'6}+5\right) \beta_3^{'5}\right),\\
{\Upsilon_3\left(\beta_1^{'},\beta_2^{'},\beta_3^{'}\right)}&:=&\lambda
^{'5} \left(\left(\lambda ^{'6}+5\right) \beta_3^{'}
\beta_2^{'5}+\left(5 \lambda ^{'3}+\left(3
   \lambda ^{'6}+2 \lambda^{'3}+1\right) \beta_1^{'}
   \beta _3^{'3}+1\right) \beta _2^{'}+\lambda^{'3} \left(5
   \lambda^{'3}+1\right) \beta_3^{'5}\right),\\
{\Upsilon_4\left(\beta_1^{'},\beta_2^{'},\beta_3^{'}\right)}&:=&\lambda^{'}
\left(\lambda^{'4} \left(5 \lambda^{'3}+1\right) \beta_3^{'}
\beta_2^{'5}+\lambda^{'}\left(\lambda^{'6}+\left(\lambda^{'6}+3
\lambda^{'3}+2\right) \beta_1^{'} \beta_3^{'3}+5\right)
\beta_2^{'}+\lambda^{'}\left(5 \lambda^{'3}+1\right)
\beta_3^{'5}\right),
 \end{eqnarray*}
{and} {$\lambda^{'3}=\xi_6^2=\omega$.} Also, we define
$\Gamma_1$ to be the set of all values that appear in the first
coordinate of elements of $\Gamma$, which ({by a
computation})
is a finite subset of $K^*$. 
Now, we state and prove the main result for this section:
\begin{thm}
Consider an element $\delta\in M_{10}^{Pl}(\Z/3\Z)$ that has a
non-singular plane model $\tilde{C}$ of the form
$\tilde{C}:\,\,X^5Y+Y^5Z+Z^5X+\alpha_3 X^2YZ^3=0$ with $\alpha_3\neq0${, and assume for simplicity that $\alpha_3\notin \Gamma_1$}. 
The full automorphism group of such $\delta$ is cyclic of order $3$,
and is generated by the transformation $\sigma:\,(x;y;z)\mapsto
(x;\omega y;\omega^2z)$.
\end{thm}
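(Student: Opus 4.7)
The plan is to invoke Proposition~\ref{nonprimitive}, which tells us that $Aut(\tilde{C})$ either fixes a pair $(L,Q)$ with $Q\notin L$, or fixes a triangle. The key rigidity is that $\sigma=diag(1,\omega,\omega^2)\in Aut(\tilde{C})$ has three pairwise distinct eigenvalues, so its only fixed points in $\mathbb{P}^2(K)$ are the three coordinate vertices and its only fixed lines are the three coordinate lines; consequently any geometric object stabilized by $Aut(\tilde{C})$ must be $\sigma$-stable. In the line-point case, $(L,Q)$ must then be a coordinate pair such as $(Z=0,[0:0:1])$. Elements of $PGL_3(K)$ fixing this pair have non-zero entries confined to the upper-left $2\times 2$ block and the $(3,3)$ entry; comparing the coefficients of $X^5Z$ and $YZ^5$ in $F(P(X,Y,Z))=\lambda F$ (neither appears in $F$) forces the off-diagonal entries of the $2\times 2$ block to vanish, so every such $P$ is diagonal. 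Writing $P=diag(a,b,e)$, the invariance conditions read $a^5b=b^5e=e^5a=a^2be^3$; normalizing $a=1$ yields $e^3=1$ and $b=e^2$, so this subgroup is exactly $\langle\sigma\rangle$, and by symmetry the same holds for the other two coordinate pairs.

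In the triangle case the fixed triangle $T$ is $\sigma$-stable, so its vertices are either the three coordinate vertices (the coordinate triangle) or a single $\sigma$-orbit of size three (an exotic triangle). In the coordinate sub-case $Aut(\tilde{C})$ consists of products $D\pi$ with $D$ diagonal and $\pi\in S_3$ a permutation; the diagonal part is handled as above, while for every non-trivial $\pi$ the extra term $\alpha_3 X^2YZ^3$ is sent to a monomial not appearing in $F$ --- the $3$-cycle $(X,Y,Z)\mapsto(Y,Z,X)$ turns it into a multiple of $X^3Y^2Z$, while the transposition $X\leftrightarrow Y$ turns it into a multiple of $XY^2Z^3$ --- contradicting $\alpha_3\neq 0$. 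Hence no non-trivial $\pi$ occurs, $Aut(\tilde{C})$ is again $\langle\sigma\rangle$, and this sub-case is in fact absorbed into the line-point case.

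The only genuinely new possibility is the exotic triangle, which is the main obstacle. Here I will conjugate $\tilde{C}$ by a suitable $P\in PGL_3(K)$ sending the exotic triangle to the coordinate triangle --- essentially the $3\times 3$ discrete Fourier transform that diagonalizes the cyclic permutation, so that $\sigma$ becomes the coordinate $3$-cycle in the new frame. The transformed polynomial $\tilde{F}(X',Y',Z')$ is invariant under the $3$-cycle and has coefficients that are explicit linear combinations of $1$ and $\alpha_3$. A hypothetical extra element of $Aut(\tilde{C})\setminus\langle\sigma\rangle$ must, in the new coordinates, be of the form $D\pi$ with $D$ a non-trivial diagonal and $\pi\in S_3$; imposing $\tilde{F}(D\pi(X',Y',Z'))=\mu\tilde{F}$ together with the auxiliary scalar $\lambda'$ satisfying $\lambda'^3=\omega$ produces, after eliminating the entries of $D$ and $\pi$, precisely the system $\Upsilon_1=1$, $\Upsilon_2=\Upsilon_3=\omega^2\Upsilon_4$ defining $\Gamma$, with $\alpha_3$ corresponding to its first coordinate. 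Since $\alpha_3\notin\Gamma_1$ by hypothesis, no such extra automorphism exists, whence $Aut(\tilde{C})=\langle\sigma\rangle$. The bulk of the remaining work lies in making this DFT conjugation and subsequent elimination fully explicit, so as to identify the resulting constraints with the system defining $\Gamma$.
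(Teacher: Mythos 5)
Your use of Proposition~\ref{nonprimitive}, your treatment of the line--point case, and your direct monomial-matrix argument when the invariant triangle is the coordinate triangle are all sound (the last is in fact a slicker route than the paper takes, which runs the whole triangle case through Harui's descendant theorem). The genuine gap is the exotic-triangle case, which is precisely where the hypothesis $\alpha_3\notin\Gamma_1$ has to be used, and which you leave as a plan rather than a proof. Moreover the plan, as described, does not match the actual structure of the computation. An exotic invariant triangle has vertices $\{Q,\sigma Q,\sigma^2Q\}$ with $Q=(1:\beta_2:\beta_3)$ and $\beta_2\beta_3\neq0$, so the conjugating matrix is not one fixed ``DFT'': it depends on two unknown parameters (plus a cube-root scalar $\lambda$ with $\lambda^3=\omega$), and the coefficients of the transformed equation are polynomials in $\alpha_3,\beta_2,\beta_3,\lambda$, not ``linear combinations of $1$ and $\alpha_3$''. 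In particular, the coordinates $\beta'_2,\beta'_3$ of $\Gamma$ come from the unknown triangle, i.e.\ from the conjugation itself, not from ``eliminating the entries of $D$ and $\pi$''; a literal DFT would only treat the single triangle through $(1:1:1)$.

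Second, even with the correctly parametrized conjugation, the constraints produced by a hypothetical extra monomial automorphism are not ``precisely'' the system $\Upsilon_1=1$, $\Upsilon_2=\Upsilon_3=\omega^2\Upsilon_4$. That system encodes only one branch, namely when the transformed curve acquires a Fermat-type core (the coefficient $\Upsilon_1$ of the sextic terms is nonzero and gets normalized to $1$) together with the extra order-six symmetry; this is the paper's Fermat-descendant case. There is a complementary branch $\Upsilon_1(\alpha_3,\beta_2,\beta_3)=0$, where $X^6,Y^6,Z^6$ vanish and the new core is of Klein type; in the paper this is the Klein-descendant computation, and it is killed not by $\alpha_3\notin\Gamma_1$ but by showing it forces $\alpha_3=0$. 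Your elimination must also separately dispose of extra diagonal elements and transposition-type elements in the new frame. Until all of these sub-cases are carried out and shown to force either $\alpha_3=0$ or $\alpha_3\in\Gamma_1$, the theorem is not established; the part you defer as ``remaining work'' is in fact the bulk of the proof.
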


\begin{proof}
It follows, by Proposition \ref{nonprimitive}, that $Aut(\tilde{C})$
either fixes a line and a point off that line or it fixes a
triangle. We treat each of these two cases.
\begin{enumerate}
  \item If $Aut(\tilde{C})$ fixes a line $L$ and a point $P$ off this line, then $L$ must be one of the reference lines $B=0$, where $B\in\{X,Y,Z\}$, and $P$ is one of the reference points namely, $[1:0:0],\,[0:1:0]$ or $[0:0:1]$ (being $\sigma\in Aut(\tilde{C})$). Consequently, $Aut(\tilde{C})$ is cyclic, since all the reference points lie on $\tilde{C}$. Also, automorphisms of $\tilde{C}$ are of the forms $$\tau_1:=[X;vY+wZ;sY+tZ],\,\tau_2:=[vX+wZ;Y;sX+tZ]\,\,\,\, or\,\,\, \tau_3:=[vX+wY;sX+tY;Z]$$
For $\tau_1$ to be in $Aut(\tilde{C})$, we must have $w=0=s$
(coefficients of $X^5Z$ and $XY^5$), and similarly, for $\tau_2$
(resp. $\tau_3$) through  the coefficients of $Y^5X$ and $Z^6$
(resp. $YZ^5$ and $X^5Z$). That is, elements of $Aut(\tilde{C})$ are
all diagonal of the form $diag(1;v;t)$ such that $tv^4=1=t^3$ and
$t^5=v$. Thus, $t=\xi_{3}^a$ and $v=\xi_{3}^{2a}$, where $\xi_{3}$
is a primitive $3$-rd root of unity, and hence, $|Aut(C)|=3$.
  \item If $Aut(\tilde{C})$ fixes a triangle and there exist neither a line nor a point leaved invariant, then by Harui \cite{Harui}, $\tilde{C}$
   is a descendant of the Fermat curve $F_6:\,\, X^6+Y^6+Z^6$ or the Klein curve $K_6:\,\,X^5Y+Y^5Z+Z^5X$. Hence, $Aut(\tilde{C})$ is conjugate to a subgroup of $Aut(F_6)=<\,[\xi_6X;Y;Z],\,[X;\xi_6Y;Z],\,[Y;Z;X],\,[X;Z;Y]\,>$ or to a subgroup of $Aut(K_6)=<\,[Z;X;Y],\,\,[X;\xi_{21}Y;\xi_{21}^{-4}Z]\,>.$
      \begin{itemize}
        \item Suppose first that $Aut(\tilde{C})$ is conjugate (through $P$) to a subgroup of $Aut(F_6)$. Then, it suffices to assume that $P^{-1}SP\in\{S,\,[Y;Z;X],\,[Y;\xi_6Z;X],\,[Y;\xi_6^2Z;X]\}$, since any element of order 3 in $Aut(F_6)$, which is not a homology, is conjugate to one of those inside $Aut(F_6)$. Now, if $P^{-1}SP=S$ then $P\in PGL_3(K)$
        is of the form $[Y;\gamma Z;\beta X],\,\,[Z;\gamma X;\beta Y]$ or $[X;\gamma Y;\beta Z]$,
        but non of them transforms $\tilde{C}$ to $\tilde{C}_P$ with core $X^6+Y^6+Z^6$, a contradiction. Furthermore, if $P^{-1}SP=[Y;Z;X]$
         (resp. $=[Y;\xi_6Z;X]$ or $=[Y;\xi_6^2Z;X]$), then
            $P$ has the form $\left(
      \begin{array}{ccc}
      \lambda & 1 & \lambda^2 \\
      \omega\lambda\beta_2 & \beta_2 & \omega^2\lambda^2\beta_2 \\
      \omega^2\lambda\beta_3 & \beta_3 & \lambda^{2}\omega\beta_3 \\
       \end{array}
        \right)$, where $\lambda^3=1$ (resp. $\lambda^3=\xi_6$ or $\lambda^3=\xi_6^2$). We thus get $\tilde{C}_P$ of the form
         {$\Upsilon_1\left(\alpha_3,\beta_2,\beta_3\right)(\lambda^6\omega X^6+Y^6+\lambda^{12}\omega^2 Z^6)+...$
         . In particular, $\Upsilon_1\left(\alpha_3,\beta_2,\beta_3\right)=1$, $\lambda^3=\xi_6^2$ and $[Y;\xi_6^2Z;X]\in Aut(\tilde{C}_P)$. Consequently, $\tilde{C}_P$ should be of the form 
{$
   X^6+Y^6+Z^6+\left(\Upsilon_2\left(\alpha_3,\beta_2,\beta_3\right)X^5Y+\Upsilon_4\left(\alpha_3,\beta_2,\beta_3\right) Y^5Z+
   \Upsilon_3\left(\alpha_3,\beta_2,\beta_3\right)XZ^5\right)+...,
$
which must be reduced to the form
   $
   X^6+Y^6+Z^6+\Upsilon_2\left(\alpha_3,\beta_2,\beta_3\right)\left(X^5Y+\omega Y^5Z+XZ^5\right)+...,
   $ because $[Y;\xi_6^2Z;X]\in Aut(\tilde{C}_P)$. This could happen only if $\alpha_3\in \Gamma_1$,}
   which is not possible by the assumptions on $\alpha_3$
         }
        Therefore, $\tilde{C}$ is not a descendant of the Fermat curve $F_6$.
%
%
%
%
%
%

        \item Secondly, suppose that $\tilde{C}$ is a descendant of the Klein curve $K_6$. This should happen through a change of the variables $P\in PGL_3(K)$ such that $\tilde{C}_P:\,X^5Y+Y^5Z+Z^5X+lower\,\,\,terms.$ We claim to show that $P^{-1}SP=\lambda S$ for some $\lambda\in K^*$. Indeed, elements of order $3$ inside $Aut(K_6)$, which are not homologies, are $S,S^{-1},[\xi_{21}^aY;\xi_{21}^{-4a}Z;X]$ and $[\xi_{21}^{-4a}Z;X;\xi_{21}^aY]$, and it is enough to consider the situation $P^{-1}SP\in\{S,\,S^{-1},\,[\xi_{21}^aY;\xi_{21}^{-4a}Z;X],\,[\xi_{21}^{-4a}Z;X;\xi_{21}^aY]\}$ with $a=0,1,2$, because any other value is conjugate inside $Aut(K_6)$ to one of these transformations.
            \par If $P^{-1}SP=\lambda S^{-1}$ then $P$ fixes one of the variables and permutes the others. Hence, the resulting core is different from $X^5Y+Y^5Z+Z^5X$, a contradiction.
            \par If $P^{-1}SP=\lambda [\xi_{21}^aY;\xi_{21}^{-4a}Z;X]$ (resp. $[\xi_{21}^{-4a}Z;X;\xi_{21}^aY]$) then $P$ has the form
$$\left(
    \begin{array}{ccc}
      \lambda\xi_{21}^{-a} & 1 & \lambda^2\xi_{21}^{-a} \\
      \lambda\xi_{21}^{-a}\omega\beta_2 & \beta_2 & \lambda^2\xi_{21}^{-a}\omega^2\beta_2 \\
      \lambda\xi_{21}^{-a}\omega^2\beta_3& \beta_3 & \lambda^{2}\xi_{21}^{-a}\omega\beta_3 \\
    \end{array}
  \right)
\,\,(resp.\,\left(
    \begin{array}{ccc}
      \lambda^{2}\xi_{21}^{-18a} & 1 & \lambda\xi_{21}^{-a} \\
      \lambda^{2}\xi_{21}^{-18a}\omega^2\beta_2 & \beta_2 & \lambda\xi_{21}^{-a}\omega\beta_2 \\
      \lambda^{2}\xi_{21}^{-18a}\omega\beta_3 & \beta_3 & \lambda\xi_{21}^{-a}\omega^{2}\beta_3 \\
    \end{array}
  \right) )$$
where $\lambda^3=\xi_{21}^{-3a}$. For both transformations, we must
have $\beta _3 \beta _2^5+\left(\alpha _3 \beta
   _3^3+1\right) \beta _2+\beta _3^5=0$ so that $X^6,Y^6,Z^6$ do not appear. Therefore, by imposing the condition $X^5Z,XY^5$ and $YZ^5$ do not appear as well, we get $\alpha_3=0$, which is already excluded.
%
%
%
Consequently, $P^{-1}SP=\lambda S$, and we proved the claim. Now,
$P$ has one of the forms $[Y;\gamma Z;\beta X],\,\,[Z;\gamma X;\beta
Y]$ or $[X;\gamma Y;\beta Z]$. Therefore, $\tilde{C}_P$ is defined
by an equation of the form
$\lambda_0(X^5Y+Y^5Z+Z^5X)+\lambda_1G(X;Y;Z)$, where $G(X;Y;Z)$ {is
one of the monomials $X^2YZ^3,\,Y^2ZX^3,$ or $Z^2XY^3$}. In
particular, $[\mu_1Z;X;\mu_2Y]\notin Aut(\tilde{C}_P)$, and
$Aut(\tilde{C}_P)\preceq <\,\tau:=[X;\xi_{21}Y;\xi_{21}^{-4}Z]>$.
Moreover, $\tau^r\in Aut(\tilde{C}_P)$ if and only if $7|r$. Hence,
$Aut(\tilde{C})$ is cyclic of order 3.
      \end{itemize}

\end{enumerate}
This completes the proof.
\end{proof}

\subsection{On type $3,(0,1)$}
\begin{prop}\label{lem4}
If $\delta\in M_{10}^{Pl}(\Z/3\Z)$ has a non-singular plane model
$\tilde{\tilde{C}}$ of the form $Z^6+Z^3L_{3,Z}+L_{6,Z}=0$, then
$Aut(\tilde{\tilde{C}})$ is either conjugate to the Hessian group
$Hess_{216}$ or it leaves invariant a point, a line or a triangle.
\end{prop}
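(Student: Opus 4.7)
The plan is to apply Mitchell's trichotomy for finite subgroups of $PGL_3(K)$ (recalled in \S3) to $Aut(\tilde{\tilde{C}})$: either (a) it fixes a point and a line with the point off the line, (b) it fixes a triangle, or (c) it is conjugate to one of the primitive subgroups $PSL(2,7)$, $A_5$, $A_6$, $Hess_{216}$, $Hess_{72}$, or $Hess_{36}$. Cases (a) and (b) already appear in the conclusion, so the whole task reduces to showing that in case (c) only $Hess_{216}$ can occur.

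The key observation is that $\sigma=[X;Y;\xi_3Z]\in Aut(\tilde{\tilde{C}})$ has matrix $diag(1,1,\xi_3)$, so $\sigma$ is a \emph{homology} of order $3$, with axis $Z=0$ and center $[0:0:1]$. The representations of $A_5$, $A_6$ and $PSL(2,7)$ inside $PGL_3(K)$ are each unique up to conjugation, and every element of order $3$ in these representations has three pairwise distinct eigenvalues $(1,\omega,\omega^2)$; in particular, none of these primitive groups contains a homology of order $3$. This rules out $A_5$, $A_6$, and $PSL(2,7)$ at once.

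For the Hessian case the representation is again unique up to conjugation (see \S3). First I would verify that $U=diag(1,1,\omega)$ does not lie in $Hess_{72}=\langle S,T,V,UVU^{-1}\rangle$ (otherwise the orders would force $Hess_{72}=Hess_{216}$). Since $Hess_{72}$ is normal in $Hess_{216}$ with $Hess_{216}/Hess_{72}\cong \Z/3\Z$ generated by the class of $U$, the entire $Hess_{216}$-conjugacy class of $U$, which consists of all homologies of order $3$ in $Hess_{216}$, lies outside $Hess_{72}$. As each of the three copies of $Hess_{36}$ is a subgroup of $Hess_{72}$ (as recalled in \S3), the same conclusion holds for $Hess_{36}$. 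Hence, if $Aut(\tilde{\tilde{C}})$ were conjugate to $Hess_{36}$ or $Hess_{72}$ it could not contain a homology of order $3$, which contradicts the presence of $\sigma$. Therefore case (c) forces $Aut(\tilde{\tilde{C}})$ to be conjugate to the full $Hess_{216}$, as claimed.

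The main technical point will be the matching step: when a transformation $P\in PGL_3(K)$ realizes the conjugation from $Aut(\tilde{\tilde{C}})$ into a standard Hessian presentation, one has to verify that $P^{-1}\sigma P$ is still a homology of order $3$ in that presentation (i.e.\ that $P$ preserves the Jordan type of $\sigma$), so that the coset argument above applies. This is a routine compatibility check; once it is in place, the coset argument finishes the proof.
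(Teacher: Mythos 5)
Your overall strategy is the same as the paper's: the whole proof hinges on the fact that $\sigma'=[X;Y;\omega Z]$ is a homology of period $3$. The paper disposes of the primitive cases in one stroke by quoting Mitchell's Theorem~9 in \cite{Mit}, which says precisely that a finite subgroup of $PGL_3(K)$ containing a homology of period $3$ and leaving no point, line or triangle invariant must be the Hessian group of order $216$. You instead try to re-derive this case by case, and two of your steps are not actually established. First, the assertion that every order-$3$ element of the primitive $A_5$, $A_6$, $PSL(2,7)$ in $PGL_3(K)$ has three distinct eigenvalues is only easy for $A_5$ and $PSL(2,7)$ (the degree-$3$ characters vanish on the order-$3$ classes, forcing eigenvalues $1,\omega,\omega^2$); for the Valentiner $A_6$ you would have to analyse the triple cover $3.A_6\subset SL_3(K)$ and rule out lifts of order $9$, since any order-$9$ element of $SL_3(K)$ whose cube is scalar necessarily has a repeated eigenvalue and hence projects exactly to a homology of period $3$. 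That is the genuinely nontrivial case, and asserting it without proof is essentially assuming the content of Mitchell's theorem you are trying to avoid.

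Second, your key claim in the Hessian case is false as stated: the $Hess_{216}$-conjugacy class of $U=diag(1,1,\omega)$ does not consist of all homologies of order $3$ in $Hess_{216}$, because $U^{-1}=diag(1,1,\omega^2)$ is also such a homology and is not conjugate to $U$ even in $PGL_3(K)$ (the eigenvalue multisets $\{1,1,\omega\}$ and $\{1,1,\omega^2\}$ are not proportional). The conclusion you need --- that $Hess_{72}$, and hence the three copies of $Hess_{36}$ inside it, contain no homology of period $3$ --- is true, and your coset argument can be repaired: both nontrivial cosets of the normal subgroup $Hess_{72}$ are represented by $U$ and $U^{2}$, and one can check directly that every element of order $3$ of $Hess_{72}\cong(\Z/3\Z)^2\rtimes Q_8$ lies in the normal subgroup generated (projectively) by $S$ and $T$, whose nontrivial elements all have eigenvalues $1,\omega,\omega^2$ and so are never homologies. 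As written, however, the step is unsupported and incorrect, so the proof has a genuine gap; the most economical fix is to do what the paper does and invoke Mitchell's Theorem~9 directly, with the observation that conjugation preserves the Jordan type of $\sigma'$ (which you correctly flag) being the only compatibility check required.
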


\begin{proof}
The result is an immediate consequence, since
$Aut(\tilde{\tilde{C}})$ contains a homology (i.e. leaves invariant
a line pointwise and a point off this line) of period $3$ namely,
$\sigma':=[X;Y;\omega Z]$, and $Hess_{216}$ is the only
multiplicative group that contains such homologies and does not
leave invariant a point, a line or a triangle (See Theorem $9$,
\cite{Mit}).
\end{proof}

Now, we can prove our main result for this section.
\begin{thm}
The automorphisms group of an element $\delta\in
M_{10}^{Pl}(\Z/3\Z)$ with a non-singular plane model
$\tilde{\tilde{C}}$ of the form $Z^6+X^5Y+XY^5+\alpha_3Z^3X^3=0$
such that $\alpha_3\neq0$ is cyclic of order $3$, and is generated
by the automorphism $\sigma':\,(x;y;z)\mapsto (x;y;\omega z)$.
\end{thm}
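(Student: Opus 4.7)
The plan is first to verify by direct substitution that $\sigma':\,[X;Y;Z]\mapsto [X;Y;\omega Z]$ preserves the defining equation: $Z^6\mapsto Z^6$ since $\omega^6=1$, the monomials $X^5Y$ and $XY^5$ are untouched, and $\alpha_3 X^3Z^3\mapsto \alpha_3 X^3Z^3$ since $\omega^3=1$. Thus $\langle\sigma'\rangle\preceq Aut(\tilde{\tilde{C}})$, and I only need to establish the reverse inclusion. By Proposition \ref{lem4}, $Aut(\tilde{\tilde{C}})$ is either conjugate to $Hess_{216}$ or leaves invariant a point, a line, or a triangle, so the argument splits into three cases.

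To rule out $Hess_{216}$, I will imitate the argument used for Type $4,(1,2)$: assuming $P^{-1}Hess_{216}P = Aut(\tilde{\tilde{C}})$ for some $P\in PGL_3(K)$, the curve $\tilde{\tilde{C}}_P$ must be invariant under the standard generators, in particular under $[Z;Y;X]$, $[X;Z;Y]$, $[Y;X;Z]$, $[Y;Z;X]$ and $[X;\omega Y;\omega^2 Z]$. The first four permutations force $\tilde{\tilde{C}}_P$ to be fully symmetric in $X,Y,Z$, and the diagonal generator further restricts monomials $X^iY^jZ^k$ to those with $j\equiv k\pmod 3$. Enumerating the surviving degree-$6$ monomials and comparing with the requirement that $\tilde{\tilde{C}}_P$ arise from $Z^6+X^5Y+XY^5+\alpha_3 X^3Z^3$ under a $PGL_3$-transformation will yield a contradiction, as the invariant sextic would force the coefficients of several core monomials to vanish, contradicting non-singularity.

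For the triangle case, Proposition \ref{lem4} combined with Harui's Theorem \ref{teo1} forces $\tilde{\tilde{C}}$ to be a descendant of the Fermat curve $F_6$ or the Klein curve $K_6$. I will follow the analysis used for Type $3,(1,2)$ in the previous subsection: enumerate the order-$3$ homologies inside $Aut(F_6)$ and $Aut(K_6)$ that could play the role of $P^{-1}\sigma'P$, describe the admissible conjugating matrices $P$, and check that the induced form of $\tilde{\tilde{C}}_P$ can never have core $X^6+Y^6+Z^6$ or $X^5Y+Y^5Z+Z^5X$ while simultaneously matching the remaining monomials of $\tilde{\tilde{C}}$.

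For the ``line and point off the line'' case, I will use that $\sigma'$ is a homology whose pointwise-fixed axis is $\{Z=0\}$ and whose isolated center is $[0:0:1]$; these are intrinsic invariants of $\sigma'$ by Theorem 4 in \cite{Mit}, and combined with Lemma 11.44 of \cite{Book}, the fixed pair must be $(\{Z=0\},[0:0:1])$. Hence every $\tau\in Aut(\tilde{\tilde{C}})$ has the block form $\tau = [aX+bY;\,cX+dY;\,eZ]$. Substituting into $\tilde{\tilde{C}}$ and matching coefficients of $Z^6$ and $Z^3X^3$ will force $b=0$ and $a^3=e^3$; matching coefficients of the monomials produced by $(aX+bY)^5(cX+dY) + (aX+bY)(cX+dY)^5$ will then force $c=0$ and $a=d$, so that after projective rescaling $\tau = [X;Y;\omega^r Z]$ for some $r\in\{0,1,2\}$, i.e., $\tau\in\langle\sigma'\rangle$.

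The main obstacle I expect is ruling out $Hess_{216}$, since the argument requires an explicit description of the $Hess_{216}$-invariant degree-$6$ polynomials together with a careful check of all possible conjugations identifying $\sigma'$ with an order-$3$ homology of the standard Hessian representation. The triangle and line-and-point cases follow closely the templates already developed in the paper, so they should be technical but routine.
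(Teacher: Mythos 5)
Your overall case division (Hessian, triangle, line-and-point) matches the paper's, but the step you flag as the main obstacle — ruling out $Hess_{216}$ — is planned around a mechanism that fails in degree $6$. In the degree-$5$ argument the contradiction indeed came from non-singularity, but for sextics there \emph{does} exist a smooth plane curve invariant under the standard representation of $Hess_{216}$: the degree-$6$ invariant of the Hessian group, $X^6+Y^6+Z^6-10(X^3Y^3+Y^3Z^3+Z^3X^3)$, whose partial derivatives $6X^2\bigl(X^3-5(Y^3+Z^3)\bigr)$, etc., have no common projective zero. Hence enumerating the $Hess_{216}$-invariant sextics cannot ``force the coefficients of several core monomials to vanish, contradicting non-singularity''; what has to be proved is that no member of the family $Z^6+X^5Y+XY^5+\alpha_3X^3Z^3$, $\alpha_3\neq0$, is projectively equivalent to that invariant sextic, and your outline offers no way to see this. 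The paper closes this case differently: if $P$ conjugates $Aut(\tilde{\tilde{C}})$ into the standard $Hess_{216}$, then, $\sigma'$ being an order-$3$ homology and such homologies forming a single class in the fixed representation, one may normalize $P^{-1}\sigma'P=\lambda\sigma'$; this forces $P=[\alpha_1X+\alpha_2Y;\beta_1X+\beta_2Y;Z]$, so $\tilde{\tilde{C}}_P$ has $Z$-degrees only $0,3,6$ with nonzero $Z^6$-coefficient and cannot admit the transpositions $[Z;Y;X],[X;Z;Y]\in Hess_{216}$. Without this normalization (or some other comparison with the Hessian sextic) your Hessian case is not closed, and this is a genuine gap.

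There is a second, smaller gap in the line-and-point case: you assert the invariant pair must be $(\{Z=0\},[0:0:1])$ ``by Theorem 4 in \cite{Mit} and Lemma 11.44 of \cite{Book}'', but Lemma 11.44 is used in the paper for a group fixing a point \emph{on} the curve, whereas here $[0:0:1]\notin\tilde{\tilde{C}}$ (the $Z^6$-coefficient is $1$); moreover $\sigma'$ also leaves invariant every point of $\{Z=0\}$ and every line through $[0:0:1]$, so a priori $Aut(\tilde{\tilde{C}})$ could fix, say, $[1:0:0]$ and $\{X=0\}$. Your coefficient computation only treats automorphisms of the block form $[aX+bY;cX+dY;eZ]$, so the other admissible pairs are not covered (they can be killed by similar easy computations, but they are missing as written). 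The paper avoids identifying the pair altogether: it invokes Harui's structure theorem (Theorem \ref{teo1}, case (2)), and eliminates every possible quotient $G'$ via Sylow arguments plus explicit checks that no element of order $2$, $4$, or of order $3$ outside $\langle\sigma'\rangle$ commutes with $\sigma'$, and that no involution conjugates $\sigma'$ to $\sigma'^{-1}$. Finally, in the triangle case note that being a descendant of $F_6$ is not in itself a contradiction: the paper allows the core to become $X^6+Y^6+Z^6$ and then verifies that the only elements of $Aut(F_6)$ preserving the transformed equation are the three diagonal ones, so the correct aim there is to bound the group, not to show the Fermat core is unattainable.
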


\begin{proof}
Suppose that $Aut(\tilde{\tilde{C}})$ is conjugate, through a
transformation $P$, to the Hessian group $Hess_{216}$. Then, we can
assume, without loss of generality, that $P^{-1}\sigma' P=\lambda
\sigma'$ for some $\lambda\in K^*$. Hence,
$P=[\alpha_1X+\alpha_2Y;\beta_1X+\beta_2Y;Z]$ and clearly,
$\{[Z;Y;X],[X;Z;Y]\}\nsubseteq Aut(\tilde{\tilde{C}}_P)$, a
contradiction. Therefore, we deduce, by Proposition \ref{lem4}, that
$Aut(\tilde{\tilde{C}})$ should fix a point, a line or a triangle.

In what follows, we treat each case.
\begin{enumerate}
  \item If $Aut(\tilde{\tilde{C}})$ fixes a line and a point off that line, and if $\tilde{\tilde{C}}$ admits a bigger non cyclic automorphism group,
  then $Aut(\tilde{\tilde{C}})$ satisfies a short exact sequence
  of the form $1\rightarrow{ \Z/3\Z}\rightarrow Aut(\tilde{\tilde{C}})\rightarrow G'\rightarrow 1$, where $G'$ is conjugate to {$\Z/m\Z$} ($m=2,3$ or $4$), $D_{2m}$ ($m=2$ or $4$), $A_4,S_4$ or $A_5$.
      \par If $G'$ is conjugate to ${\Z/3\Z},A_4,S_4$ or $A_5$, then there exists, by Sylow's theorem, a subgroup $H$ of automorphisms of  $\tilde{\tilde{C}}$ of order $9$. In particular, $H$ is conjugate to {$\Z/9\Z$ or $\Z/3\Z\times \Z/3\Z$}, but both cases do not occur. Indeed, if $H$ is conjugate to {$\Z/9\Z$} then $Aut(\tilde{\tilde{C}})$ has an element of order $9$, which is not possible because $9\nmid d-1,d,(d-1)^2,d(d-2),d(d-1),d^2-3d+3$ with $d=6$ (for more details, we refer to \cite{BaBacyc}). Moreover, if $H$ is conjugate to {$\Z/3\Z\times \Z/3\Z$} then there exists $\tau\in Aut(\tilde{\tilde{C}})$ of order $3$ such that $\tau\sigma'=\sigma'\tau$. Hence, $\tau=[vX+wY;sX+tY;Z]$, and comparing the coefficients of $Z^3Y^3$ and $X^6$ in $\tilde{\tilde{C}}_{\tau}$, we get $w=0=s$ and $v^5t=vt^5=v^3=1$. Thus $\tau\in<\sigma'>$, a contradiction.

      By a similar argument, we exclude the cases {$\Z/4\Z$} and $D_{2m}$, because for each $SmallGroup(6m,ID)$, there must be an element $\tau$ of order $2$ or $4$ which commutes with $\sigma'$.

      Finally, if $G'$ is conjugate to {$\Z/2\Z$} then there exists an element $\tau$ of order $2$ such that $\tau\sigma'\tau=\sigma'^{-1}$ and one can easily verify that such an element does not exists. { This follows immediately because $\tau=\tau^{-1}$ (being of order $2$), and since $\sigma'$ and $\sigma'^{-1}$ are not conjugate in $PGL_3(K)$}\\
      We conclude that $Aut(\tilde{\tilde{C}})$ should be cyclic (in particular, is commutative). Hence, it can not be of order $>3$ (otherwise; there must be an element $\tau\in Aut(\tilde{\tilde{C}})$ of order $>3$ which commutes with $\sigma'$, and by a previous argument, such elements do not exist).
  \item If $Aut(\tilde{\tilde{C}})$ fixed a triangle and neither a point nor a line is fixed, then it follows, by Harui \cite{Harui}, that $\tilde{\tilde{C}}$ is a descendant of the Fermat curve $F_6$ or the Klein curve $K_6$. The last case does not happen, because $Aut(K_6)$ does not have elements of order 3 whose Jordan form is the the same as $\sigma'$ (i.e a homology).
Now, suppose that $\tilde{\tilde{C}}$ is a descendant of $F_6$ that
is, $\tilde{\tilde{C}}$ can be transformed (through $P$) into a
curve $\tilde{\tilde{C}}_P$ whose core is $X^6+Y^6+Z^6$. Then,
$P=[\alpha_1X+\alpha_2Y;\,\beta_1X+\beta_2Y;\,Z]$, since there are
only two sets of homologies in $Aut(F_6)$ of order 3 namely,
$\{[\omega X;Y;Z],\,[X;\omega Y;Z],\,[X;Y;\omega Z]\}$ and
$\{[\omega^2 X;Y;Z],\,[X;\omega^2 Y;Z],\, [X;Y;\omega^2 Z]\}$
(recall that the two sets are not conjugate in $PGL_{3}(K)$. Also,
elements of the first set are all conjugate inside $Aut(F_6)$ to
$[X;Y;\omega Z]$. So it suffices to consider the situation
$P^{-1}\sigma P=\lambda\sigma$). Now, $\tilde{\tilde{C}}_P$ has the
form
$$
\mu_0X^6+\mu_1Y^6+Z^6+\alpha_3(\alpha_1X+\alpha_2Y)^3Z^3+\mu_2X^5Y+\mu_3X^4Y^2+\mu_4X^3Y^3+\mu_5X^2Y^4+\mu_6XY^5,
$$
where $\mu_0:=\alpha _1 \beta _1 \left(\alpha
_1^4+\beta_1^4\right)(=1)$ and $\mu_1:=\alpha _2 \beta _2
\left(\alpha _2^4+\beta_2^4\right)(=1)$. In particular,
$(\alpha_1\beta_1)(\alpha_2\beta_2)\neq0$ therefore,
$[X;vZ;wY],\,[vZ;wY;X],\,[wY;vZ;X]$, and $[vZ;X;wY]\notin
Aut(\tilde{\tilde{C}}_P)$, because of the monomial $XY^2Z^3$.
Moreover, $[wY;X;vZ]\in Aut(\tilde{\tilde{C}}_P)$ only if
$\alpha_1=\alpha_2$ and $w=v^3=1$. Hence
$$\tilde{\tilde{C}}_P:\,Z^6+\alpha_3\alpha_1^3(X+Y)^3Z^3+\alpha
_1(X+Y) \left(\beta_1X+\beta_2Y \right)\left(\alpha_1^4(X+Y)^4
+\left(\beta _1X +\beta_2Y \right)^4\right).$$ Consequently,
$\beta_1=\beta_2$ (because we are assuming $[Y;X;vZ]\in
Aut(\tilde{\tilde{C}}_P)$), a contradiction to invertibility of $P$. Finally, if $[X,\xi_6^rY,\xi_6^{r'}Z]\in Aut(\tilde{\tilde{C}}_P)$
then $r=0$ and $2|r'$, since $\alpha_1\alpha_2\neq0$. That is,
$|Aut(\tilde{\tilde{C}}_P)|=3$, which was to be shown.
\end{enumerate}
\end{proof}
As a conclusion of the results that are introduced in this section,
we get the following result.
\begin{cor} The locus $\widetilde{M_{10}^{Pl}(\Z/3\Z)}$ is not
ES-Irreducible, and it has at least two irreducible components.
\end{cor}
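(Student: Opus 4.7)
The plan is to combine the two explicit existence theorems just established with the definition of ES-Irreducibility from \S2. Recall that, by Lemma 2.1 together with the definition of equation strong irreducibility, $\widetilde{M_{10}^{Pl}(\Z/3\Z)}$ decomposes as the disjoint union $\bigsqcup_{[\rho]\in A_{\Z/3\Z}}\widetilde{\rho(M_{10}^{Pl}(\Z/3\Z))}$; so showing that two distinct classes $[\rho]\in A_{\Z/3\Z}$ yield non-empty strata already gives both the failure of ES-Irreducibility and the lower bound of two on the number of irreducible components.

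First, I would identify the two candidate classes coming from the normal forms table. Type $3,(0,1)$ corresponds to the representation $\rho_1$ generated by the homology $\mathrm{diag}(1,1,\omega)$ (Jordan type $(1,1,\omega)$), while type $3,(1,2)$ corresponds to $\rho_2$ generated by $\mathrm{diag}(1,\omega,\omega^2)$ (Jordan type $(1,\omega,\omega^2)$, all eigenvalues distinct). Since conjugation in $PGL_3(K)$ preserves Jordan type (up to scaling of the whole matrix by a scalar, and hence up to simultaneous multiplication of the eigenvalues by a common cube root of unity), these two classes are distinct in $A_{\Z/3\Z}$: a homology cannot be conjugate to an element whose three eigenvalues are pairwise distinct even after rescaling. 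This step is routine linear algebra.

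Second, I would invoke the two theorems proved in \S5.1 and \S5.2. The specialization $\alpha_3\neq 0$ (avoiding the finite bad set $\Gamma_1$) in the normal form for type $3,(1,2)$ yields curves with full automorphism group exactly $\langle\sigma\rangle\cong\Z/3\Z$, so $\widetilde{\rho_2(M_{10}^{Pl}(\Z/3\Z))}\neq\emptyset$. Similarly, the specialization $Z^6+X^5Y+XY^5+\alpha_3Z^3X^3=0$ with $\alpha_3\neq 0$ in the normal form for type $3,(0,1)$ yields curves with full automorphism group exactly $\langle\sigma'\rangle\cong\Z/3\Z$, so $\widetilde{\rho_1(M_{10}^{Pl}(\Z/3\Z))}\neq\emptyset$ (implicitly, one should verify that $Z^6+X^5Y+XY^5+\alpha_3Z^3X^3$ is non-singular for generic $\alpha_3$, which is a Jacobian computation).

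Finally, combining these two inputs with the disjointness in Lemma 2.1, I conclude that $\widetilde{M_{10}^{Pl}(\Z/3\Z)}$ contains at least the two distinct, non-empty strongly equation irreducible components $\widetilde{\rho_1(M_{10}^{Pl}(\Z/3\Z))}$ and $\widetilde{\rho_2(M_{10}^{Pl}(\Z/3\Z))}$. By the definition in \S2, this means $\widetilde{M_{10}^{Pl}(\Z/3\Z)}$ is not ES-Irreducible, and since ES-irreducible components provide a lower bound for topological irreducible components in the moduli space, the number of its irreducible components is at least two. The only genuinely non-trivial ingredients here are the two preceding theorems; the present corollary is a direct bookkeeping consequence, with the main (minor) obstacle being the verification that the two explicit Jordan types really lie in distinct classes of $A_{\Z/3\Z}$.
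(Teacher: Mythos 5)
Your proposal is correct and follows essentially the same route as the paper: the corollary is exactly the bookkeeping consequence of the two theorems of \S 5.1 and \S 5.2, combined with the disjoint decomposition of Lemma 2.1 and the observation that the homology $\mathrm{diag}(1,1,\omega)$ and $\mathrm{diag}(1,\omega,\omega^2)$ give distinct classes in $A_{\Z/3\Z}$. The extra checks you flag (non-conjugacy of the two types and non-singularity of the specialized model) are minor and consistent with what the paper takes for granted.
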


\section{Positive characteristic}

{Fix a prime $p>0$ and let $\mathbb{K}$ be an algebraically closed
field of positive characteristic $p>0$}. {Denote by $M_{g,p}$ the
moduli space on smooth, genus $g$ curves over the field
$\mathbb{K}$, and similarly, {we define the loci $M_{g,p}(G)$,
$M_{g,p}^{Pl}(G)$ and $\widetilde{M_{g,p}^{Pl}(G)}$ over
$\mathbb{K}$, as we did for zero characteristic} }.

{Following the abuse of notation of \S3, we c}onsider
a non-singular, {degree $d$} plane curve $C$ in
$\mathbb{P}^2(\mathbb{K})$, and assume that the order of $Aut(C)$ is
coprime with $p$, $p\nmid d(d-1)$ {and} $p\geq 7${. Also, suppose
that} the order of $Aut(F_d)$ and $Aut(K_d)$ are coprime with $p$
where $F_d: X^d+Y^d+Z^d=0$ is the Fermat curve, and
$K_d:X^{d-1}Y+Y^{d-1}Z+Z^{d-1}X=0$ is the Klein curve. Then, all
techniques, which appeared in Harui \cite{Harui}, can be applied:
Hurwitz bound, Arakawa and Oiakawa inequalities and so on. In
particular, the
arguments of {the previous} sections hold.

Consider the $p$-torsion of the degree 0 Picard group of $C$, which
is a finitely generated $\mathbb{Z}/(p)$-module of dimension
$\gamma$ (always $\gamma\leq g$, where $g$ is the genus of $C$). We
call $\gamma$ the $p$-rank of $C$.

For a point $\mathcal{P}$ of $C$, denote by $Aut(C)_{\mathcal{P}}$
the subgroup of $Aut(C)$ that fixes the place $\mathcal{P}$.
\begin{lem} Assume that $Aut(C)_{\mathcal{P}}$ is prime to $p$ for any point $\mathcal{P}$
of $C$ and the $p$-rank of $C$ is trivial. Then $Aut(C)$ is prime to
$p$.
\end{lem}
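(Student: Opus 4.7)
\medskip

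\noindent\textbf{Proof proposal.} The plan is to argue by contradiction, extract a cyclic subgroup of order $p$, observe that the stabilizer hypothesis forces this subgroup to act freely, and then derive a numerical contradiction from the Deuring--Shafarevich formula applied to the trivial $p$-rank.

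First I would assume that $p\mid |Aut(C)|$ and invoke Cauchy's theorem to produce an automorphism $\sigma\in Aut(C)$ of order exactly $p$. Set $H:=\langle\sigma\rangle$, a cyclic $p$-subgroup of $Aut(C)$. By the standing hypothesis, for every point $\mathcal{P}\in C$ the stabilizer $Aut(C)_{\mathcal{P}}$ has order prime to $p$; in particular, no non-trivial element of $H$ can fix any point of $C$, and therefore $H$ acts without fixed points on $C$, i.e. the cover $C\to C/H$ is étale.

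Next, I would apply the Deuring--Shafarevich formula to the $p$-cyclic cover $C\to D:=C/H$. Writing $\gamma=\gamma(C)$ and $\gamma_D$ for the $p$-ranks of $C$ and $D$ respectively, and using that the cover is unramified, the formula reads
\[
\gamma-1 \;=\; p\,(\gamma_D-1).
\]
By hypothesis $\gamma=0$, so the identity becomes $p(\gamma_D-1)=-1$. Since $\gamma_D$ is a non-negative integer and $p\geq 7$, the left-hand side lies in $\{-p,0,p,2p,\dots\}$, which does not contain $-1$. This contradiction shows that no element of order $p$ can exist in $Aut(C)$, hence $|Aut(C)|$ is coprime to $p$.

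The only non-routine ingredient is the use of the Deuring--Shafarevich formula, which is the correct positive-characteristic substitute for Riemann--Hurwitz when controlling $p$-ranks under a $p$-group action; once it is invoked, the arithmetic obstruction is immediate. I do not foresee any real obstacle: the hypothesis on stabilizers is designed precisely to kill the ramification contribution, and the triviality of the $p$-rank then makes the resulting equation numerically inconsistent.
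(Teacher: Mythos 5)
Your proof is correct and follows essentially the same route as the paper: pick an automorphism of order $p$, note that the stabilizer hypothesis forces the degree-$p$ cover $C\to C/\langle\sigma\rangle$ to be unramified, and then get the numerical contradiction from the Deuring--Shafarevich formula with $\gamma=0$. The only cosmetic difference is that you spell out the arithmetic $p(\gamma_D-1)=-1$ (and invoke $p\geq 7$, though any prime works), where the paper simply states the formula and calls it impossible.
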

\begin{proof} {Let} $\sigma\in Aut(C)$ {be} of order $p${. T}hen the
extension $\mathbb{K}(C)/\mathbb{K}(C)^{\sigma}$ is a finite
extension of degree $p$, and is unramified everywhere (because if it
ramifies at a place {$\mathcal{P}$} then $\sigma$ will be an element
of $Aut(C)_{\mathcal{P}}$ giving a contradiction). But, if
$\gamma=0$ (i.e. the $p$-rank is trivial for $C$) then, from
Deuring-Shafarevich formula \cite[Theorem 11.62]{Book}, we {get}
$\frac{\gamma-1}{\gamma'-1}=p$ where $\gamma'$ is the $p$-rank for
$C/<\sigma>$, which is impossible. Therefore, such extensions do not
exist.
\end{proof}

\begin{lem}\label{lem6.2} {Let $C$ be} a plane non-singular curve of degree $d\geq 4$.
If $p> (d-1)(d-2)+1$, then $Aut(C)_{\mathcal{P}}$ is coprime with
$p$ for any point $\mathcal{P}$ of the curve $C$.
\end{lem}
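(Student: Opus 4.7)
The plan is to argue by contradiction: suppose some $\sigma\in Aut(C)_{\mathcal{P}}$ has order exactly $p$, where by hypothesis $p>(d-1)(d-2)+1=2g+1$ and $g=(d-1)(d-2)/2$ is the genus of $C$. The strategy is to run the Riemann--Hurwitz formula on the degree-$p$ Galois quotient $\phi\colon C\to C':=C/\langle\sigma\rangle$ and show that in every possible configuration one recovers an upper bound $p\leq 2g+1$, yielding the contradiction.

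The first observation I would record is that $\mathcal{P}$ is a ramification point of $\phi$, since it is fixed by $\sigma$, so $\phi$ has at least one ramified place; and since $|\langle\sigma\rangle|=p=\mathrm{char}(\mathbb{K})$, every ramification occurring in $\phi$ is wild, so the different exponents satisfy $d_Q\geq 2(p-1)$ at each ramified $Q$. Writing $g_0:=g(C')$ and $r\geq 1$ for the number of ramified points, I then split the Riemann--Hurwitz identity
\[
2g-2 \;=\; p(2g_0-2)+\sum_{Q\text{ ramified}}d_Q
\]
into three cases. If $g_0\geq 1$ then the right-hand side is at least $2(p-1)$, forcing $p\leq g$. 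If $g_0=0$ and $r\geq 2$ then the right-hand side is at least $-2p+4(p-1)$, forcing $p\leq g+1$. In both situations $p\leq 2g+1$, contradicting the hypothesis.

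The delicate case is $g_0=0$ with $r=1$: here $\phi$ is an Artin--Schreier cover of $\mathbb{P}^1$ branched only at $\mathcal{P}$, so the unique ramification break $b$ (a positive integer coprime to $p$) satisfies $d_{\mathcal{P}}=(b+1)(p-1)$, and Riemann--Hurwitz collapses to $2g=(b-1)(p-1)$. Since $p>2g+1$ gives $p-1>2g$, one concludes $b\leq 1$, hence $b=1$ and $g=0$, contradicting $g\geq 3$ (which holds since $d\geq 4$). The step that needs the most care is precisely this last case: one must invoke the specific form of the different in a $\mathbb{Z}/p\mathbb{Z}$ wildly ramified cyclic extension (equivalently, the single-jump structure of the ramification filtration), a standard input from, e.g., Chapter IV of Serre's \emph{Local Fields} or the relevant sections of \cite{Book}. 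Modulo this input, the argument is a routine Riemann--Hurwitz bookkeeping, and as a byproduct it actually shows the stronger statement that $Aut(C)$ itself contains no element of order $p$.
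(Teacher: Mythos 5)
Your argument is correct, but it is a genuinely different route from the paper's. The paper disposes of the lemma in one line: it invokes Theorem 11.78 of \cite{Book}, which bounds the order of the $p$-subgroup of the stabilizer $Aut(C)_{\mathcal{P}}$ by $\frac{4p}{(p-1)^2}g^2$, and then observes that the hypothesis $p>(d-1)(d-2)+1=2g+1$ is exactly the inequality $(p-1)^2>4g^2$ needed to force that bound below $p$, so the $p$-part of the stabilizer is trivial. You instead reprove the relevant special case from scratch: assuming an order-$p$ automorphism fixing $\mathcal{P}$, you run Riemann--Hurwitz on the degree-$p$ quotient, using that all ramification is wild (so $d_Q\geq 2(p-1)$) and, in the delicate case $g_0=0$, $r=1$, the single-jump different formula $d_{\mathcal{P}}=(b+1)(p-1)$ for an Artin--Schreier cover; your case bookkeeping is accurate and each case indeed contradicts $p>2g+1$. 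What the paper's approach buys is brevity (one citation, one numerical check); what yours buys is self-containedness and transparency about where the bound $2g+1$ comes from, at the cost of importing the structure of wild ramification in $\mathbb{Z}/p\mathbb{Z}$-covers from Serre or \cite{Book}. One small caveat: your closing claim that the argument ``as a byproduct'' shows $Aut(C)$ has no element of order $p$ is slightly overstated as written, since your three cases all presuppose at least one ramified point; a fixed-point-free $\sigma$ gives an unramified cover, which needs the extra (easy) case $2g-2=p(2g_0-2)$ forcing $p\leq g-1$. That stronger statement is precisely what the paper derives afterwards via its Lemma 6.3 and the Deuring--Shafarevich formula, so with that one extra case your method also shortcuts that corollary; but it is not needed for the lemma itself, where $\mathcal{P}$ is fixed by hypothesis.
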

\begin{proof} By \cite[Theorem 11.78]{Book}, the maximal order of
the $p$-subgroup of $Aut(C)_{\mathcal{P}}$ is at most
$\frac{4p}{(p-1)^2} g^2$. Hence, with $g=\frac{(d-1)(d-2)}{2}$ and
assuming that $p>\frac{4p}{(p-1)^2} g^2$, we obtain the result.
\end{proof}

\begin{lem}\label{lem6.3} Let $C$ be a non-singular curve of genus $g\geq 2$ {that is} defined over an algebraic
closed field $\mathbb{K}$ of characteristic $p>0$. Suppose that $C$
has an unramified subcover of degree $p$, i.e. $\Phi:C\rightarrow
C'$ of degree $p$. Then, $C'$ has genus $\geq 2$, $g\equiv 1(mod\
p)$ and $\gamma\equiv 1(mod\ p)$. In particular, {one needs to
assume that $p<g$ for the existence of such subcover}.
\end{lem}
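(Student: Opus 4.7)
The plan is short: both conclusions follow from two classical formulas applied to an unramified degree-$p$ cover. The first step is a reduction to the Galois case. In characteristic $p$ a connected \'etale cover of prime degree $p$ is automatically cyclic Galois: either one invokes Artin--Schreier theory directly, or one passes to the Galois closure $\widetilde{C}\to C'$ and extracts a cyclic $\Z/p\Z$-\'etale subcover from a Sylow $p$-subgroup of its Galois group. Moreover, in the intended application inside Lemma~6.1 the cover $\Phi$ is induced by an element of order $p$ in $\mathrm{Aut}(C)$, so the Galois property is built in. Hence I may assume $\Phi\colon C\to C'$ is a Galois cyclic \'etale $\Z/p\Z$-cover.

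The second step is Riemann--Hurwitz. With no ramification contribution,
\[
2g-2 \;=\; p\,(2g'-2),
\]
where $g'$ denotes the genus of $C'$. Rewriting as $g-1=p(g'-1)$, three of the four statements drop out at once: $p\mid(g-1)$, i.e.\ $g\equiv 1\pmod{p}$; the right-hand side is strictly positive because $g\geq 2$, forcing $g'-1\geq 1$ and hence $g'\geq 2$; and then $g-1=p(g'-1)\geq p$ yields $g\geq p+1$, i.e.\ the necessary inequality $p<g$ claimed at the end of the statement.

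For the final congruence I would apply the Deuring--Shafarevich formula, the same result (\cite[Theorem~11.62]{Book}) already used in the opening lemma of this section. For an unramified cyclic $p$-cover it specializes to
\[
\gamma - 1 \;=\; p\,(\gamma' - 1),
\]
where $\gamma'$ is the $p$-rank of $C'$; this is precisely $\gamma\equiv 1\pmod{p}$. The only subtlety in the whole argument is the opening Galois reduction; everything else is a direct substitution into standard formulas, and the four conclusions come out in parallel once that reduction is in place.
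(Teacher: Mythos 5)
Your proof is, at its core, the same as the paper's: Riemann--Hurwitz for the unramified degree-$p$ map gives $2g-2=p(2g'-2)$, whence $g'\geq 2$, $g\equiv 1\pmod p$ and $g\geq p+1$, and the Deuring--Shafarevich formula $\gamma-1=p(\gamma'-1)$ gives $\gamma\equiv 1\pmod p$ (the paper phrases this last step by ruling out $\gamma'=0$, which is the same observation). The one place you diverge is the opening ``Galois reduction'', and the blanket claim there is false: in characteristic $p$ a connected \'etale cover of prime degree $p$ need not be Galois --- its monodromy group can be any transitive subgroup of $S_p$, and Artin--Schreier theory classifies only the cyclic $\mathbb{Z}/p\mathbb{Z}$-covers. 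Nor does your Galois-closure fallback repair the statement as written: since $p$ is prime there are no covers strictly between $C$ and $C'$, so a $\mathbb{Z}/p\mathbb{Z}$-quotient of the Galois closure is a different curve and does not directly yield the congruences for the $g$ and $\gamma$ of $C$. That said, the Galois hypothesis is needed only for the Deuring--Shafarevich step (the Hurwitz computation is valid for any separable unramified cover), the paper makes the same silent assumption, and in the only place the lemma is applied the extension is $\mathbb{K}(C)/\mathbb{K}(C)^{\sigma}$ with $\sigma$ of order $p$, hence Galois, exactly as you note. So your argument is sound in the setting where the lemma is used; just remove or correct the assertion that degree-$p$ \'etale covers are automatically cyclic Galois.
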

\begin{proof} The Hurwitz formula for $\Phi$ gives the equality
$(2g-2)=p(2g'-2)$, where $g'$ is the genus of $C'$. We have $g'\neq
0$ or $1$ because $g\geq 2$, therefore $g'\geq 2$ and $g-1\equiv
0(mod\ p)$. Now, consider {the} Deuring-Shafaravich formula, {which
could be read as $\gamma-1=p(\gamma'-1)$ in such unramified
extension}, where $\gamma'$ the $p$-rank of $C'$. If $\gamma=1$ then
there is nothing to prove and if $\gamma>1$ then the congruence is
clear. Finally, {the situation $\gamma=0$ does not occur}.
\end{proof}

\begin{cor} Let $C$ be a non-singular{, degree $d$} plane curve of genus $g\geq 2$ over an algebraic
closed field $\mathbb{K}$ of characteristic $p>0$. Suppose that
$p>(d-1)(d-2)+1>g${, t}hen the order of $Aut(C)$ is coprime with
$p$.
\end{cor}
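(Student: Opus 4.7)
The plan is to argue by contradiction: suppose $p \mid |Aut(C)|$, so that by Cauchy's theorem there exists $\sigma \in Aut(C)$ of order exactly $p$. I will then analyze the degree $p$ cyclic cover
\[
\Phi: C \longrightarrow C/\langle \sigma\rangle
\]
and derive a contradiction in both of the two possible situations (ramified vs.\ unramified), using Lemmas \ref{lem6.2} and \ref{lem6.3} respectively.

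First I would handle the ramified case. If $\Phi$ ramifies at some point $\mathcal{P} \in C$, then by definition $\sigma$ stabilizes $\mathcal{P}$, so $\sigma \in Aut(C)_{\mathcal{P}}$ and consequently $p \mid |Aut(C)_{\mathcal{P}}|$. But the hypothesis $p > (d-1)(d-2)+1$ is precisely what Lemma \ref{lem6.2} requires to conclude that $|Aut(C)_{\mathcal{P}}|$ is coprime to $p$ for every point $\mathcal{P}$; this gives the desired contradiction.

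Next I would handle the unramified case. If $\Phi$ is unramified everywhere, then $C \to C/\langle\sigma\rangle$ is an unramified degree $p$ cover. Observing that $C$ has genus $g = (d-1)(d-2)/2 \geq 2$ (since $d \geq 4$), Lemma \ref{lem6.3} applies and forces $p \leq g$ (in fact $p$ divides $g-1$, but the bound $p < g$ or, more carefully, $p\le g$ is already enough). However, the hypothesis chain $p > (d-1)(d-2)+1 = 2g+1 > g$ gives $p > g$, again a contradiction. Combining both cases yields that no element of order $p$ can exist in $Aut(C)$, so $\gcd(|Aut(C)|, p) = 1$.

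The argument is essentially a bookkeeping exercise once the three preliminary lemmas are in place, and there is no serious obstacle: the only point that needs a little care is noting why the two bounds in the hypothesis are both used. The bound $p > (d-1)(d-2)+1$ is the hypothesis of Lemma \ref{lem6.2} (controlling local stabilizers via the Stichtenoth–type bound on $p$-subgroups of decomposition groups from \cite[Theorem 11.78]{Book}), while the redundant-looking inequality $(d-1)(d-2)+1 > g$ is what ensures $p > g$, the key ingredient needed to rule out the existence of an unramified degree $p$ quotient via Deuring–Shafarevich as exploited in Lemma \ref{lem6.3}. Together they exhaust the two alternatives for the cyclic cover $\Phi$.
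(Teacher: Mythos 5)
Your proof is correct and follows essentially the same route as the paper: assuming an element $\sigma$ of order $p$, Lemma \ref{lem6.2} (via the hypothesis $p>(d-1)(d-2)+1$) rules out any ramification of $C\to C/\langle\sigma\rangle$, and Lemma \ref{lem6.3} together with $p>(d-1)(d-2)+1=2g+1>g$ rules out an everywhere-unramified degree $p$ subcover. Your explicit ramified/unramified case split and the remark on which hypothesis feeds which lemma are just a slightly more verbose rendering of the paper's two-line argument.
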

\begin{proof} Suppose $\sigma\in Aut(C)$ of order $p$, then
$\mathbb{K}(C)/\mathbb{K}(C)^{\sigma}$ is a separable degree $p$
extension, and by Lemma \ref{lem6.2}, it is unramified everywhere. By
Lemma \ref{lem6.3}, we {conclude} that such extensions do not exist. 
\end{proof}

As a direct consequence of the above lemmas, and because all
techniques {of \cite{Harui} are} applicable when $Aut(C)$ is coprime
with $p$, then we obtain:
\begin{cor} Assume {that} $p>13${, t}hen the automorphism groups of the curves $\tilde{C}:
X^5+Y^5+Z^4X+\beta X^3Y^2=0$ and
$\tilde{\tilde{C}}:\,\,\,X^5+X(Z^4+Y^4)+\beta Y^2Z^3=0$ such that
$\beta\neq0$, are cyclic of order 4. Moreover, $\tilde{C}$ is not
isomorphic to $\tilde{\tilde{C}}$ for any choice of the parameters.
\end{cor}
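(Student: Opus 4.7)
The plan is to transfer the characteristic zero analysis of \S4.1 and \S4.2 to positive characteristic by invoking the preceding corollary. Since $d=5$, we have $(d-1)(d-2)+1=13$, so the assumption $p>13$ combined with the previous corollary guarantees that $|Aut(\tilde{C})|$ and $|Aut(\tilde{\tilde{C}})|$ are both coprime to $p$. This is exactly the hypothesis under which Harui's Theorem \ref{teo1}, Mitchell's classification of finite subgroups of $PGL_3(\mathbb{K})$, the Hurwitz formula, Lemma $3.7$ of \cite{Harui} on inner Galois points, Yoshihara's uniqueness result, and the Arakawa--Oikawa type bounds remain valid over $\mathbb{K}$.

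First I would verify the auxiliary coprimalities that the proofs in \S4.1 and \S4.2 implicitly exploit: one has $|Aut(F_5)|=150=2\cdot 3\cdot 5^2$ and $|Aut(K_5)|=39=3\cdot 13$, together with $|PSL(2,7)|=168$, $|A_5|=60$, $|A_6|=360$, and $|Hess_{216}|=216$. All of these are coprime to every prime $p\geq 17$, so none of them can force a wild ramification phenomenon. In particular, the divisibility obstructions $4\nmid|Aut(F_5)|$ and $4\nmid|Aut(K_5)|$ used in the characteristic zero proof remain available, as do the Jordan-form computations that rule out any Hessian group (those arguments depend only on the multiplicative orders of certain elements, not on the characteristic, once $p$ is coprime to these orders).

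With this setup, the analyses of Type $4,(0,1)$ and Type $4,(1,2)$ carried out for $\tilde{C}$ and $\tilde{\tilde{C}}$ in \S4 transfer word-for-word: in both cases $Aut$ is forced to fix a point and a line off it, then to lie inside a diagonal torus, and the monomial invariance conditions cut the possibilities down to a cyclic group of order $4$. For the final non-isomorphism assertion, I would note that any $\mathbb{K}$-isomorphism $\tilde{C}\simeq\tilde{\tilde{C}}$ must be realized by some $P\in PGL_3(\mathbb{K})$ (uniqueness of the $g^2_5$), and such a $P$ would conjugate $Aut(\tilde{C})=\langle\mathrm{diag}(1,1,\xi_4)\rangle$ onto $Aut(\tilde{\tilde{C}})=\langle\mathrm{diag}(1,\xi_4,-1)\rangle$. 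The first generator is a homology (double eigenvalue), while the second has three pairwise distinct eigenvalues, and no scalar multiple can alter this multiplicity pattern; hence the two cyclic subgroups belong to distinct conjugacy classes in $PGL_3(\mathbb{K})$, which is the desired contradiction.

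The only potential obstacle is making sure that every quantitative step of \S4, especially the appeals to the Hurwitz bound and to the lists of orders of primitive subgroups of $PGL_3(\mathbb{K})$, truly requires nothing beyond coprimality of $|Aut|$ with $p$; but since $p>13$ is large enough to separate $p$ from every order that appears in the relevant classifications, this verification is routine and no new geometric input is needed.
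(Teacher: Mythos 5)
Your proposal is correct, and the first half coincides with the paper's strategy: use the preceding corollary (for $d=5$ one has $(d-1)(d-2)+1=13$, so $p>13$ forces $|Aut|$ coprime to $p$), check that the orders $150$, $39$, $168$, $60$, $360$, $216$ occurring in Harui's and Mitchell's classifications are prime to every $p\geq 17$, and then transfer the Type $4,(0,1)$ and Type $4,(1,2)$ analyses of \S 4 verbatim to $\mathbb{K}$. Where you genuinely diverge is the non-isomorphism statement. You argue via uniqueness of the $g^2_5$ (so any isomorphism is induced by some $P\in PGL_3(\mathbb{K})$) together with the observation that $P$ would have to conjugate $\langle \mathrm{diag}(1,1,\xi_4)\rangle$ onto $\langle \mathrm{diag}(1,\xi_4,\xi_4^2)\rangle$, which is impossible because the first generator is a homology (repeated eigenvalue, a line fixed pointwise) while every order-$4$ element of the second group, and every scalar multiple of it, has three pairwise distinct eigenvalues; this is exactly the ``two non-conjugate $[\rho]$'s'' mechanism underlying the characteristic-zero Theorem of \S 4, made explicit. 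The paper instead appeals to the uniqueness of the $g^2_5$ and then distinguishes the curves by the ramification data of the degree-$4$ cyclic quotient covers (six points of index $4$ versus two of index $4$ and four of index $2$, as in Remark 4.9), concluding via the Hurwitz relation that the curves lie in different components. Both routes need the first half (that the full automorphism groups are exactly these cyclic groups); yours is more elementary and purely linear-algebraic, while the paper's yields the extra geometric information about the covering types that is reused in the ES-irreducibility discussion. Either argument closes the proof.
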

\begin{proof} {We need only} to mention that the linear $g_2$-systems
for the immersion of the curve inside $\mathbb{P}^2$ are unique, up
to conjugation in $PGL_3(\mathbb{K})$ (see \cite[Lemma
11.28]{Book}){. Also,} the curves $\tilde{C}$ and
$\tilde{\tilde{C}}$ have cyclic covers of degree 4 with different
type of the cover, from Hurwitz equation. Therefore, they belong to
different irreducible components in the moduli space of genus 6
curves.
\end{proof}

\begin{cor} {The locus {$\widetilde{M_{6,p}^{Pl}(\Z/4\Z)}$} with $p>13$} has at least two
irreducible components.
\end{cor}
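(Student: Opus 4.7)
The plan is to mirror the proof of the analogous corollary from Section 4 (the one stating that $\widetilde{M_6^{Pl}(\Z/4\Z)}$ has exactly two ES-components in characteristic zero), substituting the newly established positive characteristic corollary as the key input. The strategy proceeds in three steps, and the bulk of the analytical work is already absorbed into the preceding corollary.

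First, I would exhibit two non-conjugate injective representations $\rho_1,\rho_2:\Z/4\Z\hookrightarrow PGL_3(\mathbb{K})$, coming respectively from types $4,(0,1)$ and $4,(1,2)$: namely $\rho_1(\Z/4\Z)=\langle[X;Y;\xi_4Z]\rangle$, a homology with eigenvalues $(1,1,\xi_4)$, and $\rho_2(\Z/4\Z)=\langle[X;\xi_4Y;\xi_4^2Z]\rangle$, whose generator has three pairwise distinct eigenvalues $(1,\xi_4,\xi_4^2)$. Since conjugation in $PGL_3(\mathbb{K})$ preserves the multiset of eigenvalues of a generator, these two representations cannot be conjugate, and therefore $[\rho_1]\neq[\rho_2]$ in $A_{\Z/4\Z}$.

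Second, I would apply the previous corollary to conclude that both ES-components $\rho_1(\widetilde{M_{6,p}^{Pl}(\Z/4\Z)})$ and $\rho_2(\widetilde{M_{6,p}^{Pl}(\Z/4\Z)})$ are non-empty: the curve $\tilde{C}:\,X^5+Y^5+Z^4X+\beta X^3Y^2=0$ (with $\beta\neq 0$) is non-singular and has full automorphism group exactly $\rho_1(\Z/4\Z)$, while $\tilde{\tilde{C}}:\,X^5+X(Z^4+Y^4)+\beta Y^2Z^3=0$ (with $\beta\neq 0$) is non-singular with full automorphism group exactly $\rho_2(\Z/4\Z)$. Moreover, the previous corollary guarantees that $\tilde{C}$ and $\tilde{\tilde{C}}$ are not $\mathbb{K}$-isomorphic for any admissible parameter choice, so the two ES-components are genuinely distinct inside $\widetilde{M_{6,p}^{Pl}(\Z/4\Z)}$. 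Finally, I invoke the general fact recorded after Definition of Section 2 that the number of non-empty strongly equation irreducible components is a lower bound for the number of irreducible components, which immediately yields the claim.

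The main technical hurdle has already been resolved in the previous corollary, which transfers the characteristic zero arguments to $\mathbb{K}$ using Lemmas 6.2 and 6.3 together with the hypothesis $p>13$ to guarantee coprimality of $p$ with all relevant automorphism group orders; in particular Harui's classification and the standard Hurwitz formula apply verbatim. Beyond this, no further obstacle remains: the decomposition $\widetilde{M_{6,p}^{Pl}(\Z/4\Z)} = \bigcup_{[\rho]\in A_{\Z/4\Z}} \rho(\widetilde{M_{6,p}^{Pl}(\Z/4\Z)})$ and the lower-bound property of the ES-component count do the rest of the work.
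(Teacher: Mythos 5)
Your proposal is correct and follows essentially the same route as the paper: the paper treats this corollary as an immediate consequence of the preceding one (the two curves $\tilde{C}$ and $\tilde{\tilde{C}}$ have full automorphism group $\Z/4\Z$, realize non-conjugate representations, and are non-isomorphic since their degree-$4$ cyclic covers have different ramification types), combined with the \S2 fact that non-empty strongly equation irreducible components bound the number of irreducible components from below. Your only cosmetic imprecision is that eigenvalues in $PGL_3(\mathbb{K})$ are defined only up to a common scalar, but the homology/non-homology distinction you rely on is conjugation-invariant, so the non-conjugacy of $\rho_1$ and $\rho_2$ stands.
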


Similarly, we get the following results from {those} of \S4 {and
\S5},

\begin{cor} {The locus
{$\widetilde{M_{g,p}^{Pl}(\Z/(d-1)\Z)}$}
is not ES-Irreducible, and it has at least two strongly
equation components for any odd integer $d\geq 5$ such that $p>(d-1)(d-2)+1$}. In particular, it has at least two irreducible
components.
\end{cor}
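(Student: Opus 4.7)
The plan is to transplant the characteristic zero argument from Section 4 into the positive characteristic setting by verifying that every ingredient used there remains valid under the hypothesis $p>(d-1)(d-2)+1$. Concretely, I would take the same two witnesses: the curve $\tilde C$ of Type $d-1,(0,1)$ defined by $X^d+Y^d+Z^{d-1}X+\beta X^{d-2}Y^2=0$ and the curve $\tilde{\tilde C}$ of Type $d-1,(1,2)$ defined by $X^d+X(Z^{d-1}+Y^{d-1})+\beta Y^2Z^{d-2}=0$, each with $\beta\neq 0$. Both are non-singular (a direct Jacobian check, identical to the zero characteristic case, once we observe that $p\nmid d(d-1)$ which is implied by $p>(d-1)(d-2)+1\geq 13$ for $d\geq 5$). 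Each admits the obvious order $d-1$ automorphism, and the two generators $[X;Y;\xi_{d-1}Z]$ and $[X;\xi_{d-1}Y;\xi_{d-1}^2Z]$ generate non-conjugate subgroups of $PGL_3(\mathbb{K})$, because the first is a homology while the second is not. Thus the two curves land in two distinct classes $[\rho_1],[\rho_2]\in A_{\Z/(d-1)\Z}$.

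The second step is to show that for each of $\tilde C$ and $\tilde{\tilde C}$ the full automorphism group is cyclic of order exactly $d-1$. For this I would invoke the corollary just established: under $p>(d-1)(d-2)+1$, the order of the automorphism group of every non-singular plane curve of degree $d$ is coprime to $p$. Once this tameness is granted, the entire toolkit used in the zero characteristic proof (Mitchell's classification of finite subgroups of $PGL_3$, Harui's theorem on subgroups of $Aut(C)$, Lemma 3.7 of Harui on inner Galois points, Yoshihara's uniqueness of inner Galois points, Lemma 11.44 and Lemma 11.28 of the Hirschfeld--Korchm\'aros--Torre book) applies verbatim, since all of them are insensitive to the characteristic as long as one avoids wild ramification and the group order remains coprime to $p$. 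In particular, the exclusion of the primitive groups $A_5$, $A_6$, $PSL(2,7)$, $Hess_{36}$, $Hess_{72}$, $Hess_{216}$ and of descendants of $F_d$ and $K_d$ proceeds by the same order and invariant-theoretic arguments as in Section 4.

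Carrying out these exclusions, for Type $d-1,(0,1)$ I would use the homology of period $d-1\geq 4$ to force $Aut(\tilde C)$ to fix a line and a point off it, apply the Galois point arguments to pin down the invariant line and point as the axis and center of the homology, and then a direct monomial computation reduces every putative automorphism to a diagonal matrix $diag(1;v;t)$ with $v=1$, $t^{d-1}=1$, giving $|Aut(\tilde C)|=d-1$. For Type $d-1,(1,2)$ I would first rule out the primitive groups and the descendants of $F_d$ and $K_d$ (using the order inequalities $d-1\nmid 3(d^2-3d+3)$ and $d-1\nmid 6d^2$ for $d\geq 7$, plus the characteristic-independent Hessian exclusion that runs exactly as in Type $4,(1,2)$, and separately handling the sporadic case $d=7$ by the same Fermat-automorphism list argument), and then deduce diagonal form and the relations $v^{d-1}=s^{d-1}=v^2s^{d-2}=1$ to conclude that $|Aut(\tilde{\tilde C})|=d-1$.

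The main obstacle I anticipate is not any single step in these exclusions but rather the global verification that Harui's theorem and Mitchell's classification genuinely transfer to characteristic $p$ with no hidden wild primitive subgroup slipping in. This is controlled precisely by the tameness hypothesis $p>(d-1)(d-2)+1$: the previous corollary guarantees $p\nmid |Aut(C)|$, so the only subgroups of $PGL_3(\mathbb{K})$ that can arise are those listed in characteristic zero (see, e.g., the remark preceding Lemma \ref{lem6.2} and the discussion in the paragraph that opens Section 6). Once this is in hand, the two witnesses $\tilde C$ and $\tilde{\tilde C}$ correspond to two distinct $[\rho]\in A_{\Z/(d-1)\Z}$ and both lie in $\widetilde{M_{g,p}^{Pl}(\Z/(d-1)\Z)}$, so this locus has at least two strongly equation components, is not ES-Irreducible, and by the general principle that equation components give lower bounds for irreducible components it has at least two irreducible components.
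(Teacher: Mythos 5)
Your proposal is correct and follows essentially the same route as the paper: the paper proves the corollary by establishing the tameness lemmas (Lemmas 6.1--6.3 and the ensuing corollary guaranteeing $p\nmid |Aut(C)|$ when $p>(d-1)(d-2)+1$) and then asserting that the arguments of \S 4 for the two witness curves of Types $d-1,(0,1)$ and $d-1,(1,2)$ carry over verbatim, which is exactly what you do. Your added checks (e.g.\ that $p>(d-1)(d-2)+1$ forces $p\nmid d(d-1)$ and keeps the orders of the relevant groups prime to $p$) only make explicit what the paper leaves implicit, so there is no substantive difference in approach.
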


{
\begin{cor} For $p>21$, the locus
{$\widetilde{M_{10,p}^{Pl}(\Z/3)}$} has at least two
irreducible components.
\end{cor}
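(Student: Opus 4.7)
The plan is to transport the characteristic zero proof that $\widetilde{M_{10}^{Pl}(\Z/3\Z)}$ has at least two irreducible components (established in \S5 via the two theorems on Types $3,(1,2)$ and $3,(0,1)$) into characteristic $p>21$, by invoking the machinery of \S6. The first observation is that for $d=6$ the bound $p>21=(d-1)(d-2)+1>g=10$ holds, so the corollary preceding the statement ensures that $Aut(C)$ is coprime to $p$ for every non-singular plane sextic $C$ over $\mathbb{K}$. Consequently, Harui's Theorem \ref{teo1} and the classification of finite subgroups of $PGL_3(\mathbb{K})$ (Mitchell, Blichfeldt) remain valid: indeed, the orders of the finite primitive subgroups ($A_5, A_6, PSL(2,7), Hess_{36}, Hess_{72}, Hess_{216}$) as well as $|Aut(F_6)|$ and $|Aut(K_6)|$ are bounded and coprime to any $p\geq 23$.

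Next, I would consider the same two explicit families as in \S5,
\[\tilde{C}:\,\,X^5Y+Y^5Z+Z^5X+\alpha_3 X^2YZ^3=0,\qquad
\tilde{\tilde{C}}:\,\,Z^6+X^5Y+XY^5+\alpha_3 Z^3X^3=0,\]
with $\alpha_3\in\mathbb{K}^*$, $\alpha_3\notin \Gamma_1$ in the first case. Since $p>3$, a primitive cube root of unity $\omega\in\mathbb{K}$ exists, so $\sigma=[X;\omega Y;\omega^2 Z]$ and $\sigma'=[X;Y;\omega Z]$ give order-$3$ automorphisms of $\tilde{C}$ and $\tilde{\tilde{C}}$ respectively. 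Non-singularity in positive characteristic cuts out a dense open condition on $\alpha_3$, hence each family provides infinitely many curves.

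I would then re-run the case analyses of Propositions \ref{nonprimitive} and \ref{lem4} together with the two theorems of \S5. Each step is one of: (i) ruling out conjugacy of $Aut(\tilde C)$ (resp. $Aut(\tilde{\tilde{C}})$) with a finite primitive subgroup of $PGL_3$, (ii) ruling out being a descendant of $F_6$ or $K_6$, or (iii) bounding the automorphisms by the geometry of a fixed point/line/triangle. Every such argument is a finite calculation in $PGL_3(\mathbb{K})$ involving roots of unity of orders at most $21$ and invariants of polynomial relations, all of which carry over verbatim once the automorphism group is tame. In particular, the set $\Gamma_1$ remains a finite subset of $\mathbb{K}^*$ by the polynomial nature of the defining relations $\Upsilon_j$. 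This yields two sub-loci inside $\widetilde{M_{10,p}^{Pl}(\Z/3\Z)}$ whose general member has full automorphism group equal to $\Z/3\Z$, realized through non-conjugate representations $\rho_1,\rho_2:\Z/3\Z\hookrightarrow PGL_3(\mathbb{K})$ (one being a homology of type $(0,1)$, the other of type $(1,2)$).

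Finally, distinctness of the two irreducible components follows as in characteristic zero: the associated cyclic Galois covers of degree $3$ have different ramification data (computable via the Hurwitz formula, which is valid since $p\nmid 3$), so $\tilde{C}$ and $\tilde{\tilde{C}}$ cannot lie in the same irreducible component of $M_{10,p}$. The main obstacle is purely bookkeeping: verifying that the long case analysis of \S5 uses nothing beyond tame group actions, the structure of $Aut(F_6), Aut(K_6)$, and linear algebra over $\mathbb{K}$; once this is checked, the conclusion that $\widetilde{M_{10,p}^{Pl}(\Z/3\Z)}$ has at least two strongly equation irreducible components, and hence at least two irreducible components, is immediate.
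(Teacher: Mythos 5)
Your proposal is correct and follows essentially the same route as the paper: the paper's own justification is precisely that for $p>(d-1)(d-2)+1=21$ (with $d=6$) the tameness results of \S6 make Harui's machinery and the whole case analysis of \S5 carry over verbatim, giving the two non-conjugate representations of $\Z/3\Z$ (a homology and a non-homology) each realized as the full automorphism group, hence at least two strongly equation irreducible components and thus at least two irreducible components. Your additional remark distinguishing the two families by the ramification data of the degree-$3$ covers mirrors the paper's argument in the analogous characteristic-$p$ corollary for $\Z/4\Z$, so it is consistent with, not divergent from, the paper's approach.
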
}


\begin{thebibliography}{10}

\bibitem{BaBacyc} Badr, E. {and} Bars, F.{,} \emph{Plane non-singular curves with large cyclic automorphism
group}. Preprint. See {the first version in chapter 2 of}
``On the automorphism group of
non-singular plane curves fixing the degree'', arXiv:1503.01149 (2015).

\bibitem{BaBaaut} Badr, E. {and} Bars, F.{,} \emph{The automorphism groups
for plane non-singular curves of degree 5}. Preprint. See
{the first version in Chapter 3 of} ``On the automorphism
group of non-singular plane curves fixing
the degree'', arXiv:1503.01149 (2015). {To appear in Communications in Algebra.} {

\bibitem{Bli} Blichfeldt, H., \emph{Finite collineation group, with an
introduction to the theory of operators and substitution groups}.
Univ. of Chicago, 1917.

\bibitem{Bre} Breuer, T., \emph{Characters and Automorphism Groups of Compact Riemann
Surfaces}; London Mathematical Society Lecture Note Series \textbf{280},
(2000).

\bibitem{Ca} Catanese, F., \emph{Irreducibility of the space of cyclic
covers of algebraic curves of fixed numerical type and the
irreducible components of $Sing(\overline{\mathfrak{M}_g})$},
arxiv:1011.0316v1.

\bibitem{CLP} Catanese, F., L\"onne, M. and Perroni, F.: \emph{The
irreducible components of the moduli space of dihedral covers of
algebraic curves}. arXiv:1206.5498v3 [math.AG]( 10 Jul 2014). To
appear in Groups, Geometry and Dynamics.

\bibitem{Cor} Cornalba, M., \emph{On the locus of curves with
automorphisms}. See the web or Annali di Mathematica pura ed applicata
(4) \textbf{149} (1987), 135--151 and (4) \textbf{187} (2008), 185-186.

\bibitem{Dolgachev} Dolgachev, I., \emph{Classical Algebraic Geometry: a modern view}, Private Lecture Notes in: http://www.math.lsa.umich.edu/$\sim$idolga/, published by the Cambridge Univ. Press 2012.


\bibitem{Grove} Grove, C., \emph{The syzygetic pencil of cubics with a new geometrical development of its Hesse Group}, Baltimore,
Md., (1906), PhD Thesis John Hopkins University.

\bibitem{HaLe} Hambleton I. and Lee R., \emph{Finite group action on
$\mathbb{P}^2(\mathbb{C})$}, J. Algebra \textbf{116}(1988), 227--242.
\bibitem{Harui}Harui T., \emph{Automorphism groups of plane
curves}, arXiv: 1306.5842v2[math.AG]

\bibitem{He}Henn P., \emph{Die Automorphismengruppen dar algebraischen Functionenkorper vom Geschlecht 3},
Inagural-dissertation, Heidelberg, 1976.

\bibitem{Book} Hirschfeld J. W. P., Korchm\'aros G. and Torres F.,
\emph{Algebraic Curves over Finite Fields}, Princeton Series in
Applied Mathematics, 2008.

%
%

\bibitem{KK} {Kuribayashi, A. and Komiya, K.}, \emph{On
Weierstrass points and automorphisms of curves of genus three}. In: \textquotedblleft Algebraic geometry\textquotedblright (Proc. Summer Meeting, Copenhagen 1978), LNM \textbf{732},
253--299, Springer (1979).

\bibitem{Lo}Lorenzo E., \emph{ Arithmetic Properties of non-hyperelliptic
genus 3 curves}. PhD Thesis, UPC (Barcelona), September 2014.

\bibitem {Mit}Mitchell H., \emph{Determination of the ordinary and modular ternary linear groups}, Trans.
Amer. Math. Soc. \textbf{12}, no. 2 (1911), 207-–242.

%

\bibitem{Fano} Vis T., \emph{\it The existence and uniqueness of a simple group of order
168}, see http://math.ucdenver.edu/$\sim$tvis/Coursework/Fano.pdf

\bibitem{Yoshihara} Yoshihara H., \emph{Function field theory of plane curves by dual curves}, J. Algebra \textbf{239}, no. 1
(2001), 340–-355




}



\end{thebibliography}
\end{document}